\numberwithin{equation}{section}
\newtheorem{Theorem}{Theorem}[section]
\newtheorem{Proposition}[Theorem]{Proposition}
\newtheorem{Lemma}[Theorem]{Lemma}
\def\ep{\varepsilon}
\def\bbZ{\mathbb{Z}}
\def\bbR{\mathbb{R}}
\begin{document}

\title[Multilinear Fractional Integrals]{Sharp Weighted Inequalities for Multilinear Fractional Maximal Operator and  Fractional Integrals}

\thanks{The second author is partially supported by the NSF under grant 1201504. The third author is
partially supported by the National Natural Science Foundation of China (11371200)  and the Research Fund for the Doctoral Program
of Higher Education(20120031110023).}

\author{Kangwei Li}

\address{School of Mathematical Sciences and LPMC,  Nankai University,
      Tianjin~300071, China}
\email{likangwei9@mail.nankai.edu.cn}

\author{Kabe Moen}

\address{Department of Mathematics, University of Alabama, Tuscaloosa, AL 35487, USA}
\email{kabe.moen@ua.edu}

\author{Wenchang Sun}

\address{School of Mathematical Sciences and LPMC,  Nankai University,
      Tianjin~300071, China}
\email{sunwch@nankai.edu.cn}

\begin{abstract}
In this paper, we study the  weighted inequality for multilinear fractional maximal operators and fractional integrals.
We give sharp weighted estimates for both operators.
\end{abstract}
\keywords{Sharp weighted inequalities; multiple weights; multilinear fractional maximal operator; multilinear fractional integrals.}
\maketitle

\section{Introduction and Main Results}
Fractional type operators and associated maximal functions are useful tools  in harmonic analysis,
especially in the study of differentiability or smoothness properties of functions.
Recall that for $0<\alpha<n$, the fractional maximal function $M_\alpha$ and the fractional integral operator $I_\alpha$ are defined by
\[
  M_\alpha(f)(x):=\sup_{Q\ni x}\frac{1}{|Q|^{1-\alpha/n}}\int_Q |f(y)|dy
\]
and
\[
  I_\alpha (f)(x):=\int \frac{f(y)}{|x-y|^{n-\alpha}}dy,
\]
respectively, where $f$ is a locally integrable function defined on $\bbR^n$ and $Q$ is a cube in $\bbR^n$.
We refer to \cite{Gra1,Stein} for the basic
properties of these operators.

Let $w$ be a weight, i.e., a non-negative locally integrable function.
Muckenhoupt and Wheeden \cite{MW} showed that for $1<p<n/\alpha$ and $1/q = 1/p-\alpha/n$,
 $I_\alpha$ is bounded from $L^p(w^p)$ to $L^q(w^q)$ if and only if  $w$  belongs to the $A_{p,q}$ class, that is,
\[
  [w]_{A_{p,q}}:=\sup_{Q}\left(\frac{1}{|Q|}\int_Q w(x)^qdx\right)\left(\frac{1}{|Q|}\int_Q w(x)^{-p'}dx\right)^{q/{p'}}<\infty.
\]
Moreover, the fractional maximal function $M_\alpha$  is also  bounded from $L^p(w^p)$ to $L^q(w^q)$.
See also \cite{DL,ISST,LLL,SW,WTL,YY,YYZ,ZC} for more results of fractional integral operators on various function spaces.

In \cite{LMPT}, Lacey, Moen, P\'erez and Torres gave the sharp weighted estimates for both $M_\alpha$ and $I_\alpha$. Specifically, they proved that
\begin{equation}\label{eq:ee1}
\|M_\alpha\|_{L^p(w^p)\rightarrow L^q(w^q)}\le C_{n,p}[w]_{A_{p,q}}^{(1-\frac{\alpha}{n})\frac{p'}{q}}
\end{equation}
and
\begin{equation}\label{eq:ee2}
\|I_\alpha\|_{L^p(w^p)\rightarrow L^q(w^q)}\le C_{n,p}[w]_{A_{p,q}}^{(1-\frac{\alpha}{n})\max\{1,\frac{p'}{q}\}},
\end{equation}
where the exponents of $[w]_{A_{p,q}}$ are sharp.

Multilinear fractional integral operators were studied by many authors, e.g., Grafakos \cite{G}, Kenig and
Stein \cite{KS}, Grafakos and Kalton \cite{GK}, Chen and Xue \cite{CX}, Pradolini \cite{P},
Kuk and Lee \cite{KL},
 and Mei, Xue and Lan \cite{MXL}.
For $\vec f = (f_1,\cdots,f_m)$ and $\quad 0<\alpha<mn$,  the multilinear fractional maximal function $\mathcal{M_\alpha}$ and the multilinear integral operator
$ \mathcal{I}_\alpha $ are defined by
\[
  \mathcal{M}_\alpha (\vec f)(x)=\sup_{Q\ni x}\prod_{i=1}^m\frac{1}{|Q|^{1-\alpha/{mn}}}\int_Q |f_i(y_i)|dy_i
\]
and
\[
  \mathcal{I}_\alpha (\vec{f})(x)=\int_{\bbR^{mn}} \frac{f_1(y_1)\cdots f_m(y_m)}{(|x-y_1|+\cdots+|x-y_m|)^{mn-\alpha}}d\vec{y},
\]
respectively.

To study the weighted estimates for  multilinear fractional integral operators,
Moen \cite{M} introduced the multiple $A_{\vec{P},q}$ weight.
Let $1/p_1 + \cdots + 1/p_m = 1/q + \alpha/n$.
A multiple weight $(w_1,\cdots, w_m)$ is said
to belong to the $A_{\vec{P},q}$ class if and only if
\[
 [\vec{w}]_{A_{\vec{P},q}}:= \sup_{Q}\left(\frac{1}{|Q|}\int_Q u_{\vec{w}}\right)\prod_{i=1}^m \left(\frac{1}{|Q|}\int_Q \sigma_i\right)^{q/{p_i'}}<\infty,
\]
where $u_{\vec{w}}=(\Pi_{i=1}^m w_i)^q$ and $\sigma_i=w_i^{-p_i'}$.
Moen showed that
$\mathcal{I}_\alpha$ is bounded from $L^{p_1}(w_1^{p_1})\times\cdots\times L^{p_m}(w_m^{p_m})$
to $L^q(u_{\vec{w}})$ if and only if $ [\vec{w}]_{A_{\vec{P},q}}<\infty$.

In this paper, we study the sharp estimates for both $\mathcal{M}_\alpha$ and $\mathcal{I}_\alpha$.
For the multilinear fractional maximal function, we give a sharp estimate for $0<\alpha<n$.

\begin{Theorem}\label{thm:m1}
Suppose that $0<\alpha<n$,
$q>0$, $1<p_1,\cdots,p_m<\infty$,
$1/p_1+\cdots +1/p_m=1/q+\alpha/n$,
$p'_{i_0}=\max\{p'_i:\,1\le i\le m\}$ for some $i_0$ and
$p'_{i_0}(1-\alpha/n)\ge \max_{i\ne i_0}\{p_i'\}$. Let
$\vec{w}=(w_1,\cdots,w_m)\in A_{\vec{P},q}$. Then we have
\begin{equation}\label{eq:thm:m1:e1}
  \|\mathcal{M}_{\alpha}(\vec{f})\|_{L^q(u_{\vec{w}})}\le
  C_{m,n,\vec{P},q}[\vec{w}]_{A_{\vec{P},q}}^{(1-\frac\alpha n)
  \frac{\max\{p'_i\}}{q}}\prod_{i=1}^m \|f_i\|_{L^{p_i}(w_i^{p_i})},
\end{equation}
where the exponent of $[\vec{w}]_{A_{\vec{P},q}}$ is sharp.
\end{Theorem}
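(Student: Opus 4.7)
The plan is to extend the dyadic/sparse approach of \cite{LMPT} to the multilinear setting. First, by the standard $3^n$-shift trick, it suffices to control the dyadic analogue $\mathcal{M}_\alpha^{\mathcal{D}}$. For each $k\in\bbZ$, let $\mathcal{S}_k$ be the collection of maximal dyadic cubes $Q$ with $A_Q(\vec f):=\prod_i|Q|^{\alpha/(mn)-1}\int_Q|f_i|>2^k$, and put $\mathcal{S}=\bigcup_k\mathcal{S}_k$. The sets $E(Q):=Q\setminus\bigcup_{Q'\in\mathcal{S}_{k+1},Q'\subsetneq Q}Q'$ are pairwise disjoint, and using that $\vec w\in A_{\vec P,q}$ forces $u_{\vec w}\in A_\infty$, one has $u_{\vec w}(E(Q))\gtrsim u_{\vec w}(Q)$. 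A standard level-set argument then gives
\begin{equation*}
\|\mathcal{M}_\alpha^{\mathcal{D}}(\vec f)\|_{L^q(u_{\vec w})}^q \le C\sum_{Q\in\mathcal{S}}A_Q(\vec f)^q\,u_{\vec w}(E(Q)).
\end{equation*}

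On each $Q\in\mathcal{S}$, H\"older gives $\int_Q|f_i|\le(\int_Q|f_i|^{p_i}w_i^{p_i})^{1/p_i}\sigma_i(Q)^{1/p'_i}$. Substituting and applying the $A_{\vec P,q}$ condition in the form $u_{\vec w}(Q)\prod_i\sigma_i(Q)^{q/p'_i}\le[\vec w]_{A_{\vec P,q}}|Q|^{qm-q\alpha/n}$ eliminates the $|Q|$ scaling but only produces the crude exponent $1$ on $[\vec w]_{A_{\vec P,q}}$. To obtain the sharp exponent $(1-\alpha/n)p'_{i_0}/q$, the $A_{\vec P,q}$ constant must be reserved for the $i_0$-factor while the remaining $\sigma_i$'s are treated via $A_\infty$-reverse-H\"older improvements (in the spirit of Hyt\"onen--P\'erez). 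An equivalent and perhaps cleaner formulation begins from the pointwise bound $\mathcal{M}_\alpha(\vec f)\le M_\alpha(f_{i_0})\prod_{i\ne i_0}M(f_i)$, then applies H\"older in $L^q(u_{\vec w})$ with exponents $q_{i_0}$ and $\{p_i\}_{i\ne i_0}$ (where $1/q_{i_0}:=1/p_{i_0}-\alpha/n$, so that $q/q_{i_0}+\sum_{i\ne i_0}q/p_i=1$), followed by the sharp linear estimate \eqref{eq:ee1} for $M_\alpha(f_{i_0})$ and Buckley's theorem for $M(f_i)$.

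The main obstacle is to verify that the resulting product of individual weight constants is dominated by $[\vec w]_{A_{\vec P,q}}^{(1-\alpha/n)p'_{i_0}/q}$; concretely, one needs a weight-theoretic inequality of the form
\[
[w_{i_0}]_{A_{p_{i_0},q_{i_0}}}^{(1-\alpha/n)p'_{i_0}/q_{i_0}}\prod_{i\ne i_0}[w_i^{p_i}]_{A_{p_i}}^{1/(p_i-1)}\le C[\vec w]_{A_{\vec P,q}}^{(1-\alpha/n)p'_{i_0}/q},
\]
which is precisely where the hypothesis $p'_{i_0}(1-\alpha/n)\ge p'_i$ enters in an essential way: it is exactly the condition under which the exponent arithmetic closes. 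Finally, the sharpness of the exponent is verified by testing against radial power weights $w_i(x)=|x|^{a_i}$ and characteristic functions of balls, with the $a_i$ tuned so that $[\vec w]_{A_{\vec P,q}}$ is an arbitrarily large parameter, in direct analogy with the examples of \cite{LMPT,M}.
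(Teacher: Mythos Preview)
Your proposal has a genuine gap in both branches you describe.

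\textbf{The factorization route is fatal.} After the pointwise inequality $\mathcal{M}_\alpha(\vec f)\le M_\alpha(f_{i_0})\prod_{i\ne i_0}M(f_i)$ and H\"older in $L^q(u_{\vec w})$, each factor carries the \emph{mixed} weight $u_{\vec w}$, so you are not in the one-weight setting of \eqref{eq:ee1} or Buckley's theorem. To recover the one-weight framework you would need $w_i^{p_i}\in A_{p_i}$ and $w_{i_0}\in A_{p_{i_0},q_{i_0}}$ individually, but this is \emph{not} implied by $\vec w\in A_{\vec P,q}$: the multiple-weight class is strictly larger than the product of the linear classes (see \cite{M}, where only $\sigma_i\in A_{mp_i'}$ is obtained). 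Hence the left side of the ``weight-theoretic inequality'' you write down can be $+\infty$ while $[\vec w]_{A_{\vec P,q}}<\infty$, and the hypothesis $p'_{i_0}(1-\alpha/n)\ge p'_i$ cannot rescue it.

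\textbf{The sparse/reverse-H\"older route is the wrong tool here.} A Hyt\"onen--P\'erez reverse-H\"older argument will produce a mixed bound of the form $[\vec w]_{A_{\vec P,q}}^{1/q}\prod_i[\sigma_i]_{A_\infty}^{\cdots}$; that is exactly Theorem~\ref{thm:m3}, not Theorem~\ref{thm:m1}. There is no mechanism in that argument for collapsing the $[\sigma_i]_{A_\infty}$ factors back into a power of $[\vec w]_{A_{\vec P,q}}$ with the stated exponent.

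\textbf{What the paper actually does.} After the sparse reduction (your first paragraph is fine, and the $A_\infty$ step for $u_{\vec w}$ is in fact unnecessary since $E(Q)=Q\setminus\Omega_{k+1}$ already), the paper estimates $\sum_{Q\in\mathcal S}A_Q(\vec f)^q u_{\vec w}(Q)$ directly. The key is to pull out exactly the power $[\vec w]_{A_{\vec P,q}}^{(1-\alpha/n)p'_{i_0}/q}$ by writing the summand as the $A_{\vec P,q}$ quantity raised to $(1-\alpha/n)p'_{i_0}$ times a residual product, and then to control the residual via the H\"older-type identity
\[
|E(Q)|\le u_{\vec w}(E(Q))^{\frac{1}{(m-\alpha/n)q}}\prod_{i=1}^m\sigma_i(E(Q))^{\frac{1}{(m-\alpha/n)p_i'}},
\]
which follows from $1=u_{\vec w}^{1/((m-\alpha/n)q)}\prod_i\sigma_i^{1/((m-\alpha/n)p_i')}$. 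The hypothesis $p'_{i_0}(1-\alpha/n)\ge\max_{i\ne i_0}p_i'$ is used precisely to make all residual exponents nonnegative so that $\sigma_i(Q)$ can be replaced by $\sigma_i(E(Q))$. The proof concludes with H\"older and the bound for the weighted fractional maximal operator $M_{\alpha,\sigma_{i_0}}^{\mathscr D}$ from Proposition~\ref{prop:p2}; no individual $A_p$ membership and no reverse H\"older are invoked.
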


And for the multilinear fractional integral operators, we also get a sharp estimate in some cases.
\begin{Theorem}\label{thm:m2}
Suppose that $0<\alpha< n$, $q>0$, $1<p_1,\cdots,p_m<\infty$, $1/p_1+\cdots +1/p_m=1/q+\alpha/n$,  $\vec{w}=(w_1,\cdots,w_m)\in A_{\vec{P},q}$ and
\[
  \min\left\{\frac{\max_i{p_i'}}{q},
   \min_j\frac{\max_{i\neq j}\{p_i', q\}}{p_j'}\right\}\le 1-\frac{\alpha}{n}.
\]
Then
\[
  \|\mathcal{I}_\alpha(\vec{f})\|_{L^q(u_{\vec{w}})}\le
  C_{m,n,\vec{P},q}[\vec{w}]_{A_{\vec{P},q}}^{(1-\frac\alpha n)\max_i\{1,\frac{p_i'}{q}\}}\prod_{i=1}^m \|f_i\|_{L^{p_i}(w_i^{p_i})},
\]
where the exponent of $[\vec{w}]_{A_{\vec{P},q}}$ is sharp.
\end{Theorem}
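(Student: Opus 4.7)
The plan follows the sharp linear approach of Lacey--Moen--P\'erez--Torres in three movements: reduce $\mathcal{I}_\alpha$ to a positive dyadic/sparse model, estimate that model against $[\vec w]_{A_{\vec P,q}}$ with the correct exponent, and exhibit extremal examples to pin down sharpness. The new ingredient in the multilinear setting is a case split dictated by the hypothesis $\min\{\cdots\}\le 1-\alpha/n$.

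For the reduction, the three-lattice trick dominates
\[
\mathcal{I}_\alpha(\vec f)(x)\le C_{m,n}\sum_{k=1}^{3^n}\mathcal{I}^{\mathcal{D}_k}_\alpha(\vec f)(x), \qquad \mathcal{I}^{\mathcal{D}}_\alpha(\vec f)(x)=\sum_{Q\in\mathcal{D}}|Q|^{\alpha/n}\prod_{i=1}^m\langle|f_i|\rangle_Q\,\mathbf{1}_Q(x),
\]
and a principal-cubes argument replaces the dyadic sum by a sum over a sparse subfamily $\mathcal{S}\subset\mathcal{D}$. It therefore suffices to prove the bound for the resulting sparse form $\mathcal{A}^{\mathcal{S}}_\alpha\vec f$. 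When $q>1$ I would dualize $\|\mathcal{A}^{\mathcal{S}}_\alpha\vec f\|_{L^q(u_{\vec w})}$ against $h\in L^{q'}(u_{\vec w})$ with $\|h\|_{L^{q'}(u_{\vec w})}=1$ and rewrite each average as $\langle|f_i|\rangle_Q=\langle|f_i|\sigma_i^{-1}\rangle_{Q,\sigma_i}\cdot\sigma_i(Q)/|Q|$, where $\sigma_i=w_i^{-p_i'}$. Using the $A_{\vec P,q}$ hypothesis to extract $[\vec w]_{A_{\vec P,q}}$ from the geometric factor and then applying H\"older's inequality with exponents $(p_1',\ldots,p_m',q')$ together with $m+1$ Carleson embeddings (one against each measure $\sigma_i$ and one against $u_{\vec w}$) absorbs the remaining averages into $\prod_i\|f_i\|_{L^{p_i}(w_i^{p_i})}$. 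The hypothesis selects the \emph{outer} slot hosting the power of $[\vec w]$: if $\max_ip_i'/q\le 1-\alpha/n$ the $q'$-slot is outside and the exponent is $1-\alpha/n$; if instead some $j$ satisfies $\max_{i\ne j}\{p_i',q\}/p_j'\le 1-\alpha/n$, the $p_j'$-slot is outside, giving $(1-\alpha/n)p_j'/q=(1-\alpha/n)\max_i\{1,p_i'/q\}$. The subcase $q\le 1$ is handled by expanding $\|\cdot\|_{L^q}^q$ directly and performing the same grouping.

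The main obstacle will be the combinatorial bookkeeping in the sparse estimate: one must balance H\"older and the $m+1$ Carleson embeddings so that the inner sums converge against the correct weighted $L^{p_i}$ norms while the outer sum is itself Carleson-summable---without the stated inequality on $\min\{\cdots\}$ the outer embedding fails, so this hypothesis is precisely what powers the whole argument. For sharpness, I would adapt the power-function extremizers of Lacey--Moen--P\'erez--Torres to the multilinear setting, taking $f_i(x)=|x|^{-n/p_i+\varepsilon_i}\mathbf{1}_{B(0,1)}$ and $w_i(x)=|x|^{a_i}$ with $(a_i,\varepsilon_i)$ chosen so that $[\vec w]_{A_{\vec P,q}}$ blows up at a prescribed rate as $\varepsilon\to 0^+$; a direct computation of both sides of the desired inequality then forces the exponent to be at least $(1-\alpha/n)\max_i\{1,p_i'/q\}$, matching the upper bound.
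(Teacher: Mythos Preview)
Your outline is essentially the paper's approach: reduce $\mathcal I_\alpha$ to a sparse model, extract the $A_{\vec P,q}$ constant from the geometric factor, and close with H\"older plus weighted maximal (equivalently, Carleson) embeddings. The paper organizes the $q>1$ case a bit differently, however. It proves the direct sparse bound only once, under the hypothesis $q(1-\alpha/n)\ge\max_i p_i'$ (Lemma~\ref{lm:l1}, giving exponent $1-\alpha/n$), and then handles the alternative subcase $p_j'(1-\alpha/n)\ge\max_{i\ne j}\{p_i',q\}$ by \emph{multilinear duality}: since the sparse form is symmetric under swapping the $j$th input with the output and $[\vec w^{\,j}]_{A_{\vec P^j,p_j'}}=[\vec w]_{A_{\vec P,q}}^{p_j'/q}$ (Lemma~\ref{Lm:dual}), Lemma~\ref{lm:l1} applied to the dual configuration yields the exponent $(1-\alpha/n)p_j'/q$ without rerunning the balancing. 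Your plan to treat both subcases by choosing the ``outer slot'' directly is also valid but duplicates that computation. Two details in your sketch need repair. First, the H\"older exponents cannot be $(p_1',\dots,p_m',q')$ since their reciprocals do not sum to $1$; the paper uses $(p',p_1,\dots,p_m)$ with $1/p=1/q+\alpha/n$, and crucially one of the $m{+}1$ embeddings is the \emph{fractional} weighted maximal bound $M_{\alpha,u_{\vec w}}^{\mathscr D}:L^{q'}(u_{\vec w})\to L^{p'}(u_{\vec w})$ from Proposition~\ref{prop:p2}, which absorbs the leftover power of $|Q|$. Second, for $q\le 1$ the pointwise domination produces the $\ell^q$-valued model $\mathcal I_{\alpha,q}^{\mathcal S}$ rather than the linear sparse sum; it is $\|\mathcal I_{\alpha,q}^{\mathcal S}\|_{L^q(u_{\vec w})}^q$ that expands into the tractable sum $\sum_Q(\cdots)^q u_{\vec w}(Q)$ handled by Lemma~\ref{lm:lq}. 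Your sharpness plan via power weights and power test functions matches the paper's examples, with the case $q\ge\max_i p_i'$ again covered by the same duality.
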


Similar to the estimate for Calder\'on-Zygmund operators,
although the estimate in (\ref{eq:thm:m1:e1}) is sharp, it can be improved whenever mixed estimates are invoked.

\begin{Theorem}\label{thm:m3}
Suppose that $0\le \alpha< mn$,  $q>0$, $1<p_1,\cdots,p_m<\infty$, $1/p_1+\cdots +1/p_m=1/q+\alpha/n$ and  $\vec{w}=(w_1,\cdots,w_m)\in A_{\vec{P},q}$. Then
\[
    \|\mathcal{M}_{\alpha}(\vec{f})\|_{L^q(u_{\vec{w}})}\le
  C_{m,n,\vec{P},q}\left([\vec{w}]_{A_{\vec{P},q}}^{\frac{1}{q}}\prod_{i=1}^m [\sigma_i]_{A_\infty}^{\frac{1}{p_i}(1-\frac{\alpha p}{n})}\right)\prod_{i=1}^m \|f_i\|_{L^{p_i}(w_i^{p_i})},
\]
where the exponents are sharp.
\end{Theorem}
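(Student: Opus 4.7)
\medskip
\noindent The strategy is to combine the sparse-type domination of $\mathcal{M}_\alpha$ underlying Theorem \ref{thm:m1} with a multilinear fractional Carleson embedding that extracts the $[\sigma_i]_{A_\infty}$ constants individually. First, via the standard $3^n$ shifted-grid argument, I would reduce to the dyadic operator $\mathcal{M}_\alpha^{\mathcal{D}}$; then a Calder\'on--Zygmund stopping-time decomposition at levels $2^k$ produces a sparse family $\mathcal{S}\subseteq\mathcal{D}$ with disjoint principal sets $E_Q\subseteq Q$, $|E_Q|\ge|Q|/2$, satisfying
\[
\|\mathcal{M}_\alpha^{\mathcal{D}}(\vec f)\|_{L^q(u_{\vec w})}^q \;\lesssim\; \sum_{Q\in\mathcal{S}}\Bigl(\prod_{i=1}^m\tfrac{1}{|Q|^{1-\alpha/(mn)}}\!\int_Q|f_i|\Bigr)^q u_{\vec w}(E_Q).
\]
Writing $g_i=|f_i|\sigma_i^{-1}$ (so that $\|g_i\|_{L^{p_i}(\sigma_i)}=\|f_i\|_{L^{p_i}(w_i^{p_i})}$) and inserting the $A_{\vec P,q}$ condition, the identity $m-\alpha/n=1/q+\sum_i 1/p_i'$ collapses the $|Q|$-powers and the right-hand side reduces to
\[
[\vec w]_{A_{\vec P,q}}\sum_{Q\in\mathcal{S}}\prod_{i=1}^m \sigma_i(Q)^{q/p_i}\langle g_i\rangle_{Q,\sigma_i}^q,
\]
which produces the factor $[\vec w]^{1/q}_{A_{\vec P,q}}$ in the final bound.

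The core of the argument is to establish the multilinear fractional Carleson embedding
\[
\sum_{Q\in\mathcal{S}}\prod_{i=1}^m\sigma_i(Q)^{q/p_i}\langle g_i\rangle_{Q,\sigma_i}^q \;\lesssim\; \prod_{i=1}^m [\sigma_i]_{A_\infty}^{p/p_i}\,\|g_i\|_{L^{p_i}(\sigma_i)}^q,
\]
where $1/p:=\sum_i 1/p_i = 1/q+\alpha/n$; since $p/q=1-\alpha p/n$, this gives exactly the exponent $(1/p_i)(1-\alpha p/n)$ on $[\sigma_i]_{A_\infty}$ after taking $q$-th roots. To prove this embedding I would apply H\"older's inequality for the sum with exponents $r_i=p_i/p$ (which is valid since $\sum 1/r_i=\sum p/p_i=1$), obtaining $\prod_i\bigl(\sum_{Q\in\mathcal S}\sigma_i(Q)^{q/p}\langle g_i\rangle_{Q,\sigma_i}^{qp_i/p}\bigr)^{p/p_i}$. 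Inside each inner sum I would factor
\[
\sigma_i(Q)^{q/p}\langle g_i\rangle_{Q,\sigma_i}^{qp_i/p} \;=\; \sigma_i(Q)\langle g_i\rangle_{Q,\sigma_i}^{p_i}\cdot\bigl(\sigma_i(Q)^{1/p_i}\langle g_i\rangle_{Q,\sigma_i}\bigr)^{p_i(q-p)/p}.
\]
Because $p\le q$, the exponent $p_i(q-p)/p$ is non-negative, so the trivial Hölder bound $\sigma_i(Q)^{1/p_i}\langle g_i\rangle_{Q,\sigma_i}\le\|g_i\|_{L^{p_i}(\sigma_i)}$ pulls out a uniform factor $\|g_i\|_{L^{p_i}(\sigma_i)}^{p_i(q-p)/p}$ from the sum. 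The remaining sum $\sum_{Q\in\mathcal S}\sigma_i(Q)\langle g_i\rangle_{Q,\sigma_i}^{p_i}$ is dominated by the standard $L^{p_i}(\sigma_i)$-Carleson embedding together with the Hyt\"onen--P\'erez fact that any sparse family is $\sigma_i$-Carleson with constant $\lesssim[\sigma_i]_{A_\infty}$.

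The hard part is not the upper bound but the sharpness of all $m+1$ exponents \emph{simultaneously}. I would construct multilinear power-weight examples $w_i(x)=|x|^{a_i}$ together with test functions $f_i(x)=|x|^{b_i}\chi_{B(0,1)}(x)$, driving the $a_i,b_i$ to their critical limits in a coordinated way so that each $[\sigma_i]_{A_\infty}$ and $[\vec w]_{A_{\vec P,q}}$ independently blow up at the prescribed rates. Engineering the parameters $a_i$ so that the $m$ different $A_\infty$ constants diverge at their exact nominal rates -- while $[\vec w]_{A_{\vec P,q}}$ also attains its own sharp rate -- is the delicate step, and will require a careful multilinear refinement of the power-weight computation of Lacey--Moen--P\'erez--Torres \cite{LMPT}.
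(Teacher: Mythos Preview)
Your upper-bound argument is correct, but it is \emph{not} the route the paper takes for Theorem~\ref{thm:m3}. The paper's direct proof uses the sharp reverse H\"older inequality (Proposition~\ref{prop:reverseH}): after the sparse reduction it splits $|f_i|=|f_i|w_i\cdot w_i^{-1}$ via H\"older with exponent $\alpha_i=(p_i'r_i)'$, applies reverse H\"older to $\sigma_i^{r_i}$, extracts $[\vec w]_{A_{\vec P,q}}$, and lands on a product of \emph{unweighted} fractional maximal functions $M^{\mathscr D}_{\alpha p\alpha_i/p_i}(|f_i|^{\alpha_i}w_i^{\alpha_i})$; the $[\sigma_i]_{A_\infty}$ exponents then emerge from the constant in Proposition~\ref{prop:p2} via $p_i/\alpha_i-1\simeq[\sigma_i]_{A_\infty}^{-1}$. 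Your Carleson-embedding approach---substitute $g_i=f_i\sigma_i^{-1}$, extract $[\vec w]_{A_{\vec P,q}}$ directly from the $A_{\vec P,q}$ condition, then H\"older in the sum with $r_i=p_i/p$ and peel off $\bigl(\sigma_i(Q)^{1/p_i}\langle g_i\rangle_{Q,\sigma_i}\bigr)^{p_i(q-p)/p}$ before invoking the sparse $A_\infty$-Carleson bound---is exactly the method the paper uses to prove the more general two-weight Theorem~\ref{thm:m4} (see Lemma~\ref{lem:carleson}), specialized to $u=u_{\vec w}$. The paper even remarks that Theorem~\ref{thm:m3} follows from Theorem~\ref{thm:m4} but presents both proofs precisely because they use different techniques. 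Your route is arguably cleaner for this statement, since it avoids tracking the perturbed exponents $\alpha_i$; the reverse-H\"older route, on the other hand, stays closer to the one-weight intuition and does not require knowing the sparse-Carleson fact.

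On sharpness: you are overcomplicating it. You do not need to make the $m$ constants $[\sigma_i]_{A_\infty}$ blow up independently; a single one-parameter family suffices. The paper takes $w_i(x)=|x|^{(n-\varepsilon)/p_i'}$ and $f_i(x)=|x|^{\varepsilon-n}\chi_{B(0,1)}$ for \emph{all} $i$, computes $[\vec w]_{A_{\vec P,q}}\simeq\varepsilon^{-q(m-1/p)}$, $[\sigma_i]_{A_\infty}\lesssim\varepsilon^{-1}$, $\prod_i\|f_i\|_{L^{p_i}(w_i^{p_i})}\simeq\varepsilon^{-1/p}$, and $\|\mathcal M_\alpha(\vec f)\|_{L^q(u_{\vec w})}\gtrsim\varepsilon^{-m-1/q}$, which matches the upper bound as $\varepsilon\to0$. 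No delicate multi-parameter engineering is needed.
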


Moreover, in the so-called two weight case -- dropping the requirement that $u_{\vec w}=\big(\Pi_i w_i\big)^q$ and taking $u$ to be independent of $\vec w$ -- one can break the homogeneity requirement that $\frac1q=\frac1p-\frac{\alpha}{n}$ and simply require $q\geq p$.  If we are given an $n+1$ tuple of weights $(u,w_1,\ldots,w_n)=(u,\vec w)$ then define
$$[u,\vec w]_{A_{\vec P,q}}=
\sup_Q|Q|^{\frac{\alpha}{n}+\frac1q-\frac1p}\left(\frac{1}{|Q|}\int_Q u\right)^{1/q}\prod_{i=1}^m \left(\frac{1}{|Q|}\int_Q \sigma_i\right)^{1/{p_i'}},$$
where, again, $\sigma_i=w_i^{-p_i'}.$
Then we have the following extension of Theorem \ref{thm:m3}.

\begin{Theorem}\label{thm:m4}
Suppose that $0\le \alpha< mn$, $1<p_1,\cdots,p_m<\infty$, $1/p_1+\cdots +1/p_m=1/p$ and $p\leq q$. If $(u,\vec{w})$ is an $m+1$ tuple of weights satisfying $\sigma_i=w_i^{-p_i'}\in A_\infty$ for all $i=1,\ldots,m$ and $[u,\vec{w}]_{A_{\vec{P},q}}<\infty$, then
\[
    \|\mathcal{M}_{\alpha}(\vec{f})\|_{L^q(u)}\le
  C_{m,n,\vec{P},q}\left([u,\vec{w}]_{A_{\vec{P},q}}\prod_{i=1}^m [\sigma_i]_{A_\infty}^{\frac{p}{q}\frac{1}{p_i}}\right)\prod_{i=1}^m \|f_i\|_{L^{p_i}(w_i^{p_i})}
\]
\end{Theorem}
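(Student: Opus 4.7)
The plan combines a dyadic reduction, a multilinear principal-cube decomposition, and a weighted Carleson embedding that uses the $A_\infty$ hypothesis on each $\sigma_i$. After the standard three-lattice reduction to a dyadic operator $\mathcal{M}_\alpha^{\mathcal{D}}$, I substitute $f_i = g_i\sigma_i$, so that $\|g_i\|_{L^{p_i}(\sigma_i)} = \|f_i\|_{L^{p_i}(w_i^{p_i})}$, and aim to prove
\[
\|\mathcal{M}_\alpha^{\mathcal{D}}(\vec g\vec\sigma)\|_{L^q(u)} \le C[u,\vec w]_{A_{\vec P,q}}\prod_{i=1}^m [\sigma_i]_{A_\infty}^{p/(qp_i)}\prod_{i=1}^m\|g_i\|_{L^{p_i}(\sigma_i)}.
\]

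The first substantive step is a multilinear principal-cube construction patterned on the proof of Theorem \ref{thm:m3}: it produces a sparse family $\mathcal F\subset\mathcal D$ with disjoint shells $\{E_F\}$, $|E_F|\ge|F|/2$, and a pointwise bound $\mathcal M_\alpha^{\mathcal D}(\vec g\vec\sigma)(x)\lesssim A(\pi(x))$, where $A(F):=\prod_i|F|^{\alpha/(mn)-1}(g_i\sigma_i)(F)$ and $\pi(x)$ is the smallest $F\in\mathcal F$ containing $x$. Integrating against $u$ and partitioning by $\{E_F\}$, then expanding $(g_i\sigma_i)(F)=\sigma_i(F)\langle g_i\rangle_F^{\sigma_i}$ and $\sigma_i(F)=|F|\langle\sigma_i\rangle_F$, and using $1/p=\sum_i 1/p_i$ together with $1-1/p_i'=1/p_i$, the definition of $[u,\vec w]_{A_{\vec P,q}}$ exactly absorbs the geometric factors, yielding
\[
\int(\mathcal M_\alpha^{\mathcal D}(\vec g\vec\sigma))^q u \le C [u,\vec w]_{A_{\vec P,q}}^q \sum_{F\in\mathcal F}\prod_{i=1}^m\bigl(\sigma_i(F)(\langle g_i\rangle_F^{\sigma_i})^{p_i}\bigr)^{q/p_i}.
\]
H\"older's inequality in $F$ with exponents $p_i/p$ (whose reciprocals sum to $1$) splits the sum into a product, so it remains to establish, for each $i$,
\[
\sum_{F\in\mathcal F}\bigl(\sigma_i(F)(\langle g_i\rangle_F^{\sigma_i})^{p_i}\bigr)^{q/p}\le C[\sigma_i]_{A_\infty}\|g_i\|_{L^{p_i}(\sigma_i)}^{p_iq/p}.
\]

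Writing $X_F:=\sigma_i(F)(\langle g_i\rangle_F^{\sigma_i})^{p_i}$, Jensen gives $X_F\le\int_F g_i^{p_i}\sigma_i\le\|g_i\|_{L^{p_i}(\sigma_i)}^{p_i}$, and since $q/p\ge 1$ the crude $\ell^\infty$--$\ell^1$ bound $\sum_F X_F^{q/p}\le\|g_i\|_{L^{p_i}(\sigma_i)}^{p_i(q/p-1)}\sum_F X_F$ reduces matters to the Fujii--Wilson Carleson embedding $\sum_F X_F\le C[\sigma_i]_{A_\infty}\|g_i\|_{L^{p_i}(\sigma_i)}^{p_i}$, which follows from the standard trio $\sigma_i(F)\le C[\sigma_i]_{A_\infty}\sigma_i(E_F)$, disjointness of $\{E_F\}$, and Doob's weighted maximal inequality on $L^{p_i}(\sigma_i)$ (valid since $p_i>1$). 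Taking $q$-th roots then yields the claimed constant. The main obstacle is the first step: the natural multilinear stopping on $\prod_i\langle f_i\rangle_F$ is not a single-function Calder\'on--Zygmund decomposition, so obtaining a genuinely sparse family requires either running parallel principal families for each $f_i$ and merging them, or using a direct product-adapted stopping as in Theorem \ref{thm:m3}; once this is in hand, the remainder is routine two-weight bookkeeping.
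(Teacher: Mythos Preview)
Your overall strategy matches the paper's: reduce to a dyadic grid, perform the multilinear stopping-time (sparse) decomposition, extract $[u,\vec w]_{A_{\vec P,q}}$, split by H\"older with exponents $p_i/p$, and finish with a Carleson embedding carrying the factor $[\sigma_i]_{A_\infty}$. Two remarks on the details.

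First, the sparse family is not an obstacle: the paper constructs it in the proof of Theorem~\ref{thm:m1} by a direct product-stopping on $\prod_i\langle |f_i|\rangle_Q$, and both proofs simply quote it. After H\"older, the paper controls each factor by recognizing it as $\big(M^{\mathscr D}_{\beta_i,\sigma_i}(f_i\sigma_i^{-1})(x)\big)^{q_i}\sigma_i(Q)$ with $\beta_i/n=1/p_i-p/(qp_i)$, then applying Lemma~\ref{lem:carleson} and Proposition~\ref{prop:p2}. Your $\ell^\infty$--$\ell^1$ trick, reducing $\sum_F X_F^{q/p}$ to $\sum_F X_F$ via the Jensen bound $X_F\le\|g_i\|_{L^{p_i}(\sigma_i)}^{p_i}$, is a legitimate and slightly more elementary alternative that avoids the fractional maximal function and lands on the same exponent.

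Second, there is a genuine gap in your justification of $\sum_F X_F\lesssim[\sigma_i]_{A_\infty}\|g_i\|_{L^{p_i}(\sigma_i)}^{p_i}$. The per-cube estimate $\sigma_i(F)\le C[\sigma_i]_{A_\infty}\,\sigma_i(E_F)$ is \emph{false} for the Fujii--Wilson constant: take $\sigma=1$ on $\mathbb R\setminus(\tfrac12,1]$ and $\sigma=\delta$ on $(\tfrac12,1]$; one checks that $[\sigma]_{A_\infty}$ stays bounded as $\delta\to0$, yet with $F=[0,1]$ and $E_F=(\tfrac12,1]$ one has $\sigma(F)/\sigma(E_F)\sim 1/\delta$. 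The correct route---precisely the content of the paper's Lemma~\ref{lem:carleson}---is the \emph{summed} packing condition
\[
\sum_{\substack{F\in\mathcal F\\F\subset R}}\sigma_i(F)\;\le\;2\sum_{F\subset R}\frac{\sigma_i(F)}{|F|}\,|E_F|\;\le\;2\int_R M(\sigma_i\chi_R)\;\le\;2[\sigma_i]_{A_\infty}\,\sigma_i(R),
\]
followed by the dyadic Carleson embedding theorem (for which the boundedness of $M^{\mathscr D}_{\sigma_i}$ on $L^{p_i}(\sigma_i)$ is the real input). Replace your ``standard trio'' by this packing argument and the proof is complete.
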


Theorem \ref{thm:m4} is the multilinear version of Theorem 2.2 in \cite{CM2}.  To see that Theorem \ref{thm:m4} does indeed extend Theorem \ref{thm:m3} notice that when $\frac{1}{q}=\frac{1}{p}-\frac{\alpha}{n}$ we have $\frac{p}{q}=1-\frac{\alpha p}{n}$ and if $u=u_{\vec w}=\big(\Pi_i w_i\big)^q$ then
$$[u_{\vec{w}},\vec w]_{A_{\vec P,q}}=[\vec w]_{A_{\vec{P},q}}^{1/q}.$$
 Even though Theorem \ref{thm:m3} follows from Theorem \ref{thm:m4} we provide proofs for both Theorems.  We present both proofs because they use different techniques.  For Theorem \ref{thm:m3} we use a sharp reverse H\"older estimate (see Proposition \ref{prop:reverseH} below), while Theorem \ref{thm:m4} will use Carleson sequences (see Lemma \ref{lem:carleson}).

\section{Proofs of Theorem~\ref{thm:m1} and Theorem~\ref{thm:m2}}
Recall that the standard dyadic grid in $\bbR^n$ consists of the cubes
\[
  [0,2^{-k})^n+2^{-k}j,\qquad k\in\bbZ, j\in\bbZ^n.
\]
Denote the standard dyadic grid by $\mathcal{D}$.

By a general dyadic grid $\mathscr{D}$ we mean a collection of cubes with the following
properties: (i) for any $Q\in\mathscr{D}$ its sidelength $l_Q$ is of the form $2^k$, $k\in\bbZ$; (ii)
 $Q\cap R \in \{Q,R,\varnothing\}$ for any $Q,R\in\mathscr{D}$; (iii) the cubes of a fixed sidelength $2^k$ form a partition
of $\bbR^n$.

For any $0\le\alpha<n$, we define
\[
  M_{\alpha,w}^{\mathscr{D}}f(x)=\sup_{Q\ni x, Q\in \mathscr{D}}\frac{1}{w(Q)^{1-\frac{\alpha}{n}}}\int_Q |f|w.
\]
When $\alpha=0$, we denote $M_{0,w}^{\mathscr{D}}$ simply by $M_{w}^{\mathscr{D}}$.
We need the following result.
\begin{Proposition}[{\cite[Theorem 2.3]{M1}}]\label{prop:p2}
If $0\le \alpha<n$, $1<p\le \frac n\alpha$ and $1/q=1/p-\alpha/n$, then
\[
  \|M_{\alpha,w}^{\mathscr{D}}f\|_{L^q(w)}\le (1+\frac{p'}{q})^{1-\frac \alpha n}\|f\|_{L^p(w)}.
\]
\end{Proposition}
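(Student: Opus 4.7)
The plan is to deduce this from two sharp endpoint estimates combined by a Marcinkiewicz truncation with optimal cutoff. Write $\beta=1-\alpha/n$; the target constant $(1+p'/q)^\beta$ rewrites as $(\beta p')^\beta$ via $1/q+1/p'=\beta$, equivalently $q\beta=1+q/p'$, which will be the algebraic glue of the whole argument.

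\emph{Step 1 (Endpoints).} For each $\lambda>0$ decompose the level set $\Omega_\lambda=\{M_{\alpha,w}^{\mathscr{D}}f>\lambda\}$ into its maximal dyadic cubes $\{Q_j\}$, which satisfy $w(Q_j)^\beta<\lambda^{-1}\int_{Q_j}|f|w$. Raising to the $1/\beta$-power (valid since $1/\beta\ge1$) and applying the power-mean inequality $\sum_j a_j^{1/\beta}\le\bigl(\sum_j a_j\bigr)^{1/\beta}$ collapses the sum and yields the natural weak-type endpoint
\[
\lambda\,w(\Omega_\lambda)^\beta\le\int_{\Omega_\lambda}|f|\,w \qquad(\star)
\]
with sharp constant $1$. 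The companion strong endpoint $\|M_{\alpha,w}^{\mathscr{D}}f\|_{L^\infty(w)}\le\|f\|_{L^{n/\alpha}(w)}$ is immediate from H\"older's inequality with exponents $n/\alpha$ and $n/(n-\alpha)$ applied on each cube, also with sharp constant $1$.

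\emph{Step 2 (Optimized truncation and bootstrap).} For each $\lambda>0$ split $f=f\mathbf{1}_{\{|f|>t(\lambda)\}}+f\mathbf{1}_{\{|f|\le t(\lambda)\}}$ and choose the cutoff $t(\lambda)$ so that the bounded part has $L^{n/\alpha}(w)$-norm exactly $\lambda/2$; by the strong endpoint its maximal image is at most $\lambda/2$, and subadditivity gives $\Omega_\lambda\subseteq\{M_{\alpha,w}^{\mathscr{D}}(f\mathbf{1}_{\{|f|>t(\lambda)\}})>\lambda/2\}$. Applying $(\star)$ to this tail yields $w(\Omega_\lambda)\le(2/\lambda)^{1/\beta}\bigl(\int_{\{|f|>t(\lambda)\}}|f|w\bigr)^{1/\beta}$. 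Substituting into the layer cake $\|M_{\alpha,w}^{\mathscr{D}}f\|_{L^q(w)}^q=q\int_0^\infty\lambda^{q-1}w(\Omega_\lambda)\,d\lambda$, switching the order of integration, performing the explicit $\lambda$-integration with the help of the implicit definition of $t(\lambda)$, and matching exponents via $q\beta-1=q/p'$ produces, after a single application of H\"older, a self-bounded inequality of the form
\[
\|M_{\alpha,w}^{\mathscr{D}}f\|_{L^q(w)}^q\le(\beta p')^{\beta q/p}\|f\|_{L^p(w)}^{q/p}\,\|M_{\alpha,w}^{\mathscr{D}}f\|_{L^q(w)}^{q/p'}.
\]
Since $q-q/p'=q/p$, dividing and raising to the $p/q$-power yields $\|M_{\alpha,w}^{\mathscr{D}}f\|_{L^q(w)}\le(\beta p')^\beta\|f\|_{L^p(w)}=(1+p'/q)^\beta\|f\|_{L^p(w)}$, the claim.

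\emph{Main obstacle.} Keeping the constant sharp. A textbook Marcinkiewicz interpolation between $(\star)$ (weak $(1,1/\beta)$) and the $L^\infty$ endpoint produces a suboptimal bound of order $(\beta p')^{1/q}$; the sharper exponent $\beta$ in the target comes only from \emph{optimizing} the cutoff $t(\lambda)$ so that the two endpoints contribute equally, and then from carrying out the resulting double integral carefully rather than applying a crude product bound. The nonlinear $1/\beta$-power on $w(\Omega_\lambda)$ in $(\star)$ forbids any direct linear layer cake, so this optimization appears unavoidable. In the limiting case $\alpha=0$ one has $\beta=1$ and the argument specializes to the classical $\|M_w^{\mathscr{D}}f\|_{L^p(w)}\le p'\|f\|_{L^p(w)}$, the standard sanity check.
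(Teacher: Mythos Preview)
The paper does not prove this proposition; it is quoted from \cite{M1} and used as a black box, so there is no in-paper argument to compare against. Evaluating your proposal on its own merits:

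Step~1 is correct; both endpoints hold with constant~$1$ exactly as you say.

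Step~2 has a genuine gap. After the truncation you obtain
$w(\Omega_\lambda)\le (2/\lambda)^{1/\beta}\bigl(\int_{\{|f|>t(\lambda)\}}|f|\,w\bigr)^{1/\beta}$,
a bound involving only $f$. Plugging this into the layer cake and carrying out the $\lambda$-integration (by whatever change of variable) yields an estimate purely in terms of $f$; there is no mechanism for $\|M_{\alpha,w}^{\mathscr D}f\|_{L^q(w)}$ to reappear on the right, and ``a single H\"older'' cannot manufacture it. Your sanity check in fact fails: when $\alpha=0$ (so $\beta=1$, $p=q$) one has $t(\lambda)=\lambda/2$, and your outline becomes the textbook Marcinkiewicz computation
\[
\|M_w^{\mathscr D}f\|_{L^p(w)}^p\le 2p\int_0^\infty\lambda^{p-2}\!\int_{\{|f|>\lambda/2\}}\!|f|\,w\,d\lambda=\frac{2^p p}{p-1}\,\|f\|_{L^p(w)}^p,
\]
giving constant $2(p')^{1/p}$ rather than $p'$. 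The optimized truncation simply does not recover the sharp bound.

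The sharp constant follows from $(\star)$ \emph{alone}, without the $L^\infty$ endpoint. Write $w(\Omega_\lambda)=w(\Omega_\lambda)^{1-\beta}w(\Omega_\lambda)^{\beta}$, apply $(\star)$ to the second factor, and use Fubini to obtain (with $g:=M_{\alpha,w}^{\mathscr D}f$)
\[
\|g\|_{L^q(w)}^q\le q\int |f|\,w\int_0^{g(x)}\lambda^{q-2}w(\Omega_\lambda)^{1-\beta}\,d\lambda\,dx.
\]
H\"older in $\lambda$ with exponents $\tfrac{1}{\beta},\tfrac{1}{1-\beta}$ (convergence at $0$ is precisely your identity $q\beta=1+q/p'>1$) bounds the inner integral by
$(q-1/\beta)^{-\beta}\,g(x)^{q/p'}\,\bigl(q^{-1}\|g\|_{L^q(w)}^q\bigr)^{1-\beta}$.
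A final H\"older in $x$ with exponents $p,p'$ and the bootstrap give
\[
\|g\|_{L^q(w)}\le\Bigl(\frac{q\beta}{q\beta-1}\Bigr)^{\beta}\|f\|_{L^p(w)}=\Bigl(1+\frac{p'}{q}\Bigr)^{1-\alpha/n}\|f\|_{L^p(w)}.
\]
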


We say that  $\mathcal{S}:=\{Q_{j,k}\}$ is a sparse family of cubes if:
\begin{enumerate}
\item for each fixed $k$ the cubes $Q_{j,k}$ are pairwise disjoint;
\item if $\Gamma_k=\bigcup_j Q_{j,k}$, then $\Gamma_{k+1}\subset \Gamma_k$;
\item $|\Gamma_{k+1}\bigcap Q_{j,k}|\le \frac{1}{2}|Q_{j,k}|$.
\end{enumerate}
For any $Q_{j,k}\in\mathcal{S}$, we define $E(Q_{j,k})=Q_{j,k}\setminus \Gamma_{k+1}$.
Then the sets $E(Q_{j,k})$ are pairwise disjoint and $|E(Q_{j,k})|\ge \frac{1}{2}|Q_{j,k}|$.

Define
\[
  \mathscr{D}_t:=\{2^{-k}([0,1)^n+m+(-1)^k t): k\in\bbZ, m\in\bbZ^n\},\quad t\in\{0, 1/3\}^n.
\]
The importance of these grids is shown by the following proposition, which
 can be found in \cite[proof of Theorem 1.10]{HP}. See also \cite[Proposition 5.1]{L}.
\begin{Proposition}\label{prop:p1}
There are $2^n$ dyadic grids $\mathscr{D}_t$, $t\in\{0,1/3\}^n$ such that for any cube
$Q\subset \bbR^n$ there exists a cube $Q_t \in \mathscr{D}_t$ satisfying
 $Q \subset Q_t$ and $l(Q_t)\le 6l(Q)$.
\end{Proposition}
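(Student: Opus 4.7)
The plan is to reduce to the one-dimensional problem via the product structure of axis-parallel cubes and then exploit the ``one-third shift'' built into the definition of $\mathscr{D}_t$. Concretely, I would first establish the 1D statement: for every interval $I\subset\bbR$ there exist $t\in\{0,1/3\}$ and $J\in\mathscr{D}_t$ with $I\subset J$ and $l(J)\le 6\,l(I)$. The $n$-dimensional claim then follows by applying the 1D result to each coordinate interval of $Q$ separately and taking products; this glues together cleanly because all side lengths of $Q$ are equal, so the scale selected in the 1D argument is the same in every coordinate.

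For the 1D step, I would pick $k\in\bbZ$ with $3\,l(I)\le 2^k<6\,l(I)$, which immediately ensures $l(J)=2^k<6\,l(I)$ no matter which of the two grids I end up using. If $I$ lies in one of the intervals $[2^km,\,2^k(m+1))$ of $\mathscr{D}_0$, take $t=0$ and we are done. Otherwise, since $l(I)\le 2^k/3$, $I$ must straddle exactly one grid point $p=2^km_0$, and hence $I\subset(p-l(I),\,p+l(I))\subset(p-2^k/3,\,p+2^k/3)$. It then suffices to verify that the unique interval of $\mathscr{D}_{1/3}$ at scale $2^k$ containing $p$ also contains all of $(p-2^k/3,\,p+2^k/3)$.

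Unpacking the definition $\mathscr{D}_t=\{2^{-k}([0,1)^n+m+(-1)^kt)\}$, the interval of $\mathscr{D}_{1/3}$ at scale $2^k$ containing $p$ works out to $[p-2\cdot 2^k/3,\,p+2^k/3)$ or $[p-2^k/3,\,p+2\cdot 2^k/3)$, the two cases arising from the sign $(-1)^k$. In either case this interval contains $(p-2^k/3,\,p+2^k/3)\supset I$, which completes the 1D step. Since nothing here uses anything beyond direct case analysis, the main ``obstacle'' is purely bookkeeping around the sign convention $(-1)^k$ and making sure the same scale $k$ simultaneously works in all $n$ coordinates; this is the classical ``one-third trick''.
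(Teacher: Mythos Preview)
The paper does not actually prove this proposition: it is quoted with a citation to \cite{HP} (proof of Theorem~1.10) and \cite{L} (Proposition~5.1), and no argument is given in the text. So there is no ``paper's own proof'' to compare against.

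Your argument is correct and is precisely the standard proof of the one-third trick that those references contain. The reduction to $n=1$ via the product structure is fine because, as you note, the scale $k$ chosen in the 1D step depends only on $l(I)=l(Q)$ and hence is the same in every coordinate; the resulting product cube then lies in $\mathscr{D}_t$ for $t=(t_1,\ldots,t_n)\in\{0,1/3\}^n$ by the very definition of $\mathscr{D}_t$. In the 1D step your case analysis on the sign $(-1)^k$ is accurate: at scale $2^k$ the $\mathscr{D}_{1/3}$-interval through the straddled grid point $p$ is $[p-\tfrac{2}{3}2^k,\,p+\tfrac{1}{3}2^k)$ or $[p-\tfrac{1}{3}2^k,\,p+\tfrac{2}{3}2^k)$, and either one swallows $I$ since $l(I)\le 2^k/3$. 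The only place to be slightly more careful is the endpoint bookkeeping (half-open dyadic intervals versus a possibly closed $I$), but this is routine and in any case irrelevant for the applications in the paper, which only use Proposition~\ref{prop:p1} to pass from $\mathcal{M}_\alpha$ and $\mathcal{I}_\alpha$ to their dyadic counterparts.
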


Given a dyadic grid $\mathscr{D}$ and  a sparse family $\mathcal{S}$ in $\mathscr{D}$.
Define the dyadic fractional integral operators by
\[
  (\mathcal{I}_\alpha^{\mathcal{S}})(\vec{f})(x)
  =\sum_{Q\in\mathcal{S}}|Q|^{\frac{\alpha}{n}-m}
   \prod_{i=1}^m\int_Q f_i(y_i)dy_i \cdot\chi_Q(x)
\]
and
\[
  (\mathcal{I}_{\alpha,q}^{\mathcal{S}})(\vec{f})(x)
  =\left(\sum_{Q\in\mathcal{S}}\left(|Q|^{\frac{\alpha}{n}-m}
   \prod_{i=1}^m\int_Q f_i(y_i)dy_i \right)^q\cdot\chi_Q(x)\right)^{1/q}.
\]

In \cite{M}, Moen showed that for $q> 1$
\[
  (\mathcal{I}_\alpha(\vec{f}))(x)
   \le C\sum_{Q\in\mathscr{D}}
   \frac{l(Q)^\alpha}{|3Q|^m}\int_{(3Q)^m} f_1(y_1)\cdots f_m(y_m)d\vec{y}\chi^{}_Q(x),
\]
and for $q\le1$,
\[
  (\mathcal{I}_\alpha(\vec{f}))(x)
  \le C\left(\sum_{Q\in\mathscr{D}}\left(\frac{l(Q)^\alpha}{|3Q|^m}
   \int_{(3Q)^m} f_1(y_1)\cdots f_m(y_m)d\vec{y}\right)^{q}\chi^{}_Q(x)\right)^{1/q}.
\]

By Proposition~\ref{prop:p1} and similar arguments as that in \cite{CM}, we get
\begin{equation}\label{eq:q Gt 1}
  \mathcal{I}_\alpha(\vec{f})(x)\lesssim
   \sum_{t\in\{0,1/3\}^n}(\mathcal{I}_\alpha^{\mathcal{S}_t})(\vec{f})(x),\quad q> 1,
\end{equation}
and
\begin{equation}\label{eq:q Lt 1}
  \mathcal{I}_\alpha(\vec{f})(x)\lesssim
   \sum_{t\in\{0,1/3\}^n}(\mathcal{I}_{\alpha,q}^{\mathcal{S}_t})(\vec{f})(x),\quad q\le1.
\end{equation}
We have the following lemmas.
\begin{Lemma} \label{Lm:dual}
Suppose that $\vec{w}=(w_1,\cdots,w_m)\in A_{\vec{P},q}$,
$0\le\alpha<mn$, $1<q,p_1,\cdots,p_m<\infty$ and $1/p_1+\cdots +1/p_m=1/q+\alpha/n$.
Then
\[
  \vec{w}^i:=(w_1,\cdots,w_{i-1},(\Pi_{i=1}^m w_i)^{-1},w_{i+1},\cdots,w_m)\in A_{\vec{P}^i,p_i'},
\]
where $\vec{P}^i=(p_1,\cdots,p_{i-1},q',p_{i+1},\cdots,p_m)$ and $[\vec{w}^i]_{A_{\vec{P}^i,p_i'}}=[\vec{w}]_{A_{\vec{P},q}}^{p_i'/q}$.
\end{Lemma}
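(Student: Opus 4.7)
The plan is to verify the statement by a direct algebraic computation with the defining quantities, since the lemma is essentially a manifestation of the scaling/duality built into the $A_{\vec P,q}$ condition.

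First I would check that the indices are consistent: for $\vec P^i=(p_1,\ldots,p_{i-1},q',p_{i+1},\ldots,p_m)$ and the target exponent $p_i'$, the $A_{\vec P^i,p_i'}$-condition requires
\[
\sum_{j\ne i}\frac{1}{p_j}+\frac{1}{q'}=\frac{1}{p_i'}+\frac{\alpha}{n},
\]
which follows immediately from $\sum_j 1/p_j=1/q+\alpha/n$ and $1/q+1/q'=1$, $1/p_i+1/p_i'=1$.

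Next I would identify the relevant weights attached to $\vec w^i$. Writing $W=\prod_k w_k$, the ``$u$''-weight for $\vec w^i$ is
\[
u_{\vec w^i}=\Big(\prod_{j\ne i}w_j\cdot W^{-1}\Big)^{p_i'}=w_i^{-p_i'}=\sigma_i .
\]
The auxiliary ``$\sigma$''-weights are $\sigma_j^{(i)}=w_j^{-p_j'}=\sigma_j$ for $j\ne i$, while for the $i$-th slot, where the weight is $W^{-1}$ and the exponent is $q'$ (so the dual exponent is $(q')'=q$), one gets $\sigma_i^{(i)}=(W^{-1})^{-q}=W^q=u_{\vec w}$.

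Finally I would substitute these into the definition of $[\vec w^i]_{A_{\vec P^i,p_i'}}$ and collect exponents:
\[
[\vec w^i]_{A_{\vec P^i,p_i'}}=\sup_Q\left(\frac{1}{|Q|}\int_Q \sigma_i\right)\left(\frac{1}{|Q|}\int_Q u_{\vec w}\right)^{p_i'/q}\prod_{j\ne i}\left(\frac{1}{|Q|}\int_Q \sigma_j\right)^{p_i'/p_j'}.
\]
Factoring out a common exponent $p_i'/q$ from the right-hand side (using that the exponent on $\sigma_i$ is $1=(p_i'/q)(q/p_i')$, and that for $j\ne i$ it equals $(p_i'/q)(q/p_j')$), the whole expression becomes
\[
\left[\,\left(\frac{1}{|Q|}\int_Q u_{\vec w}\right)\prod_{j=1}^{m}\left(\frac{1}{|Q|}\int_Q \sigma_j\right)^{q/p_j'}\,\right]^{p_i'/q}=[\vec w]_{A_{\vec P,q}}^{p_i'/q},
\]
which yields both the finiteness $\vec w^i\in A_{\vec P^i,p_i'}$ and the claimed identity of constants.

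The proof is essentially bookkeeping of exponents; the only place one must be careful is the observation $(q')'=q$ and the correct identification of $u_{\vec w^i}$ and $\sigma_i^{(i)}$, so that the ``diagonal'' term contributes $\int_Q u_{\vec w}$ and the ``off-diagonal'' $u$-term contributes $\int_Q w_i^{-p_i'}$. There is no analytic obstacle.
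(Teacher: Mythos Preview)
Your proof is correct and follows exactly the same approach as the paper's: a direct substitution into the definition of $[\vec w^i]_{A_{\vec P^i,p_i'}}$, identifying $u_{\vec w^i}=w_i^{-p_i'}$ and the $i$-th dual weight as $u_{\vec w}$, and then recognizing the resulting expression as $[\vec w]_{A_{\vec P,q}}^{p_i'/q}$. Your write-up is simply more explicit about the index check and the identification of the weights, but the argument is the same bookkeeping computation.
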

\begin{proof}
By the definition we have
\begin{eqnarray*}
[\vec{w}^i]_{A_{\vec{P}^i,p_i'}}&=&\sup_Q \left(\frac{1}{|Q|}\int_Q w_i^{-p_i'}\right)\left(\frac{1}{|Q|}\int_Q u_{\vec{w}}\right)^{p_i'/q}
\prod_{j\neq i}\left(\frac{1}{|Q|}\int_Q w_j^{-p_j'}\right)^{p_i'/{p_j'}}\\
&=&[\vec{w}]_{A_{\vec{P},q}}^{p_i'/q}.
\end{eqnarray*}
\end{proof}

\begin{Lemma}\label{lm:l1}
Suppose that $0<\alpha< n$,  $1<p_1,\cdots,p_m<\infty$,
$1/p_1+\cdots +1/p_m=1/q+\alpha/n$, $q(1-\alpha/n)\ge max\{p_i'\}$
 and
that $\vec{w}=(w_1,\cdots,w_m)\in A_{\vec{P},q}$. Then
\[
  \|\mathcal{I}_\alpha^{\mathcal{S}}(|f_1|,\cdots,|f_m|)\|_{L^q(u_{\vec{w}})}\le
  C_{m,n,\vec{P},q}[\vec{w}]_{A_{\vec{P},q}}^{1-\alpha/n}\prod_{i=1}^m \|f_i\|_{L^{p_i}(w_i^{p_i})}.
\]
\end{Lemma}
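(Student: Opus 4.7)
The plan is to combine duality, sparseness, and the $A_{\vec P,q}$ condition. Since $q\ge q(1-\alpha/n)\ge\max_i p_i'>1$, the norm $\|\mathcal{I}_\alpha^{\mathcal{S}}(|f_1|,\ldots,|f_m|)\|_{L^q(u_{\vec w})}$ equals the supremum over nonnegative $g$ with $\|g\|_{L^{q'}(u_{\vec w})}\le 1$ of
$$S:=\sum_{Q\in\mathcal{S}}|Q|^{\alpha/n-m}\prod_{i}\int_Q|f_i|\cdot\int_Q g\,u_{\vec w}.$$
I would substitute $f_i=h_i\sigma_i$ (so that $\|h_i\|_{L^{p_i}(\sigma_i)}=\|f_i\|_{L^{p_i}(w_i^{p_i})}$) and rewrite each $Q$-integral as a weighted average, turning $S$ into $\sum_Q |Q|^{\alpha/n-m}u_{\vec w}(Q)\prod_i\sigma_i(Q)\cdot\langle g\rangle^{u_{\vec w}}_Q\prod_i\langle h_i\rangle^{\sigma_i}_Q$. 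Sparseness ($|Q|\le 2|E(Q)|$ with disjoint $\{E(Q)\}$) then converts the sum into an integral over $\bigcup_Q E(Q)$, where each $x$ singles out the unique $Q(x)\in\mathcal{S}$ with $x\in E(Q(x))$.

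The algebraic core of the argument will be extracting $[\vec{w}]_{A_{\vec P,q}}^{1-\alpha/n}$ from the coefficient $|Q|^{\alpha/n}(u_{\vec w}(Q)/|Q|)\prod_i(\sigma_i(Q)/|Q|)$. Raising the $A_{\vec P,q}$ inequality to the power $1-\alpha/n$ gives
$$\left(\frac{u_{\vec w}(Q)}{|Q|}\right)^{1-\alpha/n}\prod_{i}\left(\frac{\sigma_i(Q)}{|Q|}\right)^{q(1-\alpha/n)/p_i'}\le [\vec{w}]_{A_{\vec P,q}}^{1-\alpha/n},$$
and the hypothesis $q(1-\alpha/n)\ge\max_i p_i'$ forces $q(1-\alpha/n)/p_i'\ge 1$ throughout. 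After factoring out this product, the residual $u_{\vec w}(Q)^{\alpha/n}\langle g\rangle^{u_{\vec w}}_Q$ is dominated pointwise on $Q$ by the weighted fractional dyadic maximal function $M_{\alpha,u_{\vec w}}^{\mathscr{D}}g(x)$, while each $\langle h_i\rangle^{\sigma_i}_Q$ is dominated by $M_{\sigma_i}^{\mathscr{D}}h_i(x)$.

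The estimate would then close via H\"older's inequality in $x$ with appropriately matched exponents, combined with Proposition~\ref{prop:p2} (which bounds $M_{\alpha,u_{\vec w}}^{\mathscr{D}}$ from $L^{q'}(u_{\vec w})$ into the matching $L^{r}(u_{\vec w})$ with $1/r=1/q'-\alpha/n$, and each $M_{\sigma_i}^{\mathscr{D}}$ on $L^{p_i}(\sigma_i)$). The main obstacle will be the bookkeeping of the leftover factors of the form $(|Q|/\sigma_i(Q))^{q(1-\alpha/n)/p_i'-1}$ produced by the extraction; these are absorbed by invoking the pointwise identity $u_{\vec w}^{1/q}\prod_i\sigma_i^{1/p_i'}=1$ together with a generalized H\"older inequality, which simultaneously yields $[\vec{w}]_{A_{\vec P,q}}\ge 1$. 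That last fact permits us to upgrade any intermediate bound $C[\vec{w}]^\theta$ with $\theta\le 1-\alpha/n$ to the claimed $C[\vec{w}]_{A_{\vec P,q}}^{1-\alpha/n}$.
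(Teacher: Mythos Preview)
Your proposal is correct and follows essentially the same route as the paper's proof: duality, the substitution $f_i\mapsto h_i\sigma_i$, extraction of $[\vec w]_{A_{\vec P,q}}^{1-\alpha/n}$, the identity $u_{\vec w}^{1/q}\prod_i\sigma_i^{1/p_i'}=1$ combined with H\"older and $|Q|\le 2|E(Q)|$ to absorb the leftover factors (the paper's display~\eqref{eq:e4}), and the close via $M_{\alpha,u_{\vec w}}^{\mathscr D}$, $M_{\sigma_i}^{\mathscr D}$ and Proposition~\ref{prop:p2}. The only cosmetic difference is that the paper keeps the sum over $Q\in\mathcal S$ and applies a discrete H\"older with the weights $u_{\vec w}(E(Q))^{1/p'}$ and $\sigma_i(E(Q))^{1/p_i}$ rather than first passing to an integral over $\bigcup_Q E(Q)$.
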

\begin{proof}
It is equivalent to prove the following
\[
  \|\mathcal{I}_\alpha^{\mathcal{S}}(|f_1|\sigma_1,\cdots,|f_m|\sigma_m)\|_{L^q(u_{\vec w})}\le
  C_{m,n,\vec{P},q}[\vec{w}]_{A_{\vec{P},q}}^{1-\alpha/n}\prod_{i=1}^m \|f_i\|_{L^{p_i}(\sigma_i)}.
\]
Let $1/p=1/q+\alpha/n$. We have
\begin{eqnarray}
  \label{eq:Lm:L1:e1}
&&\|\mathcal{I}_\alpha^{\mathcal{S}}(|f_1|\sigma_1,\cdots,|f_m|\sigma_m)\|_{L^q(u_{\vec w})}
   \\
&=&\sup_{\|h\|_{L^{q'}(u_{\vec w})}=1}\int_{\bbR^n}\mathcal{I}_\alpha^{\mathcal{S}}
  (|f_1|\sigma_1,\cdots,|f_m|\sigma_m)h u_{\vec w}\nonumber \\
&=&\sup_{\|h\|_{L^{q'}(u_{\vec w})}=1}\sum_{Q\in\mathcal{S}}|Q|^{\frac{\alpha}{n}-m}
  \prod_{i=1}^m\int_Q |f_i|\sigma_i dy_i \cdot\int_Q h u_{\vec w}\nonumber \\
&=&\sup_{\|h\|_{L^{q'}(u_{\vec w})}=1}\sum_{Q\in\mathcal{S}}
   \left(\frac{u_{\vec w}(Q)}{|Q|}\right)^{1-\frac{\alpha}{n}}
\prod_{i=1}^m\left(\frac{\sigma_i(Q)}{|Q|}\right)^{\frac{q(1-\alpha/n)}{p_i'}}
\prod_{i=1}^m \sigma_i(Q)^{1-\frac{q}{p_i'}(1-\frac{\alpha}{n})} \nonumber \\
&&\times|Q|^{(m-\frac{\alpha}{n})((1-\frac{\alpha}{n})q-1)}
  \cdot\frac{1}{u_{\vec w}(Q)^{1-\alpha/n}}\int_Q h u_{\vec w}\cdot
\prod_{i=1}^m\frac{1}{\sigma_i(Q)}\int_Q |f_i|\sigma_i dy_i\nonumber \\
&\le&[\vec{w}]_{A_{\vec{P},q}}^{1-\alpha/n}\sup_{\|h\|_{L^{q'}(u_{\vec w})}=1}
  \sum_{Q\in\mathcal{S}}
|Q|^{(m-\frac{\alpha}{n})\frac{q}{p'}}
\prod_{i=1}^m \sigma_i(Q)^{1-\frac{q}{p_i'}(1-\frac{\alpha}{n})}\nonumber  \\
&&\times\frac{1}{u_{\vec w}(Q)^{1-\alpha/n}}\int_Q h u_{\vec w}\cdot
  \prod_{i=1}^m\frac{1}{\sigma_i(Q)}\int_Q |f_i|\sigma_i dy_i. \nonumber
\end{eqnarray}
By H\"older's inequality, we have
\begin{eqnarray}
\label{eq:e4}
|E(Q)|&=&\int_{E(Q)} u_{\vec w}^{\frac{1}{(m-\frac{\alpha}{n})q}}\prod_{i=1}^m \sigma_i^{\frac{1}{(m-\frac{\alpha}{n})p_i'}}dx\\
&\le&u_{\vec w}(E(Q))^{\frac{1}{(m-\frac{\alpha}{n})q}}\prod_{i=1}^m \sigma_i(E(Q))^{\frac{1}{(m-\frac{\alpha}{n})p_i'}}.\nonumber
\end{eqnarray}
Since $|Q|\le 2|E(Q)|$ and $q(1-\alpha/n)\ge \max\{p_i'\}$, we have
\begin{eqnarray*}
&&|Q|^{(m-\frac{\alpha}{n})\frac{q}{p'}}
\prod_{i=1}^m \sigma_i(Q)^{1-\frac{q}{p_i'}(1-\frac{\alpha}{n})}\\
&\lesssim& u_{\vec w}(E(Q))^{\frac{1}{p'}}\prod_{i=1}^m
  \sigma_i(E(Q))^{\frac{q}{p'p_i'}}\prod_{i=1}^m
     \sigma_i(Q)^{1-\frac{q}{p_i'}(1-\frac{\alpha}{n})}\\
&\le&u_{\vec w}(E(Q))^{\frac{1}{p'}}\prod_{i=1}^m \sigma_i(E(Q))^{1/{p_i}}.
\end{eqnarray*}
Hence
\begin{eqnarray*}
&&\sum_{Q\in\mathcal{S}}|Q|^{(m-\frac{\alpha}{n})\frac{q}{p'}}
\prod_{i=1}^m \sigma_i(Q)^{1-\frac{q}{p_i'}(1-\frac{\alpha}{n})}
  \frac{1}{u_{\vec w}(Q)^{1-\alpha/n}}\int_Q h u_{\vec w}\\
&&\times\prod_{i=1}^m\frac{1}{\sigma_i(Q)}\int_Q |f_i|\sigma_i dy_i\\
&\lesssim& \sum_{Q\in\mathcal{S}}
\frac{1}{u_{\vec w}(Q)^{1-\alpha/n}}\int_Q h u_{\vec w}
  \cdot u_{\vec w}(E(Q))^{\frac{1}{p'}}\cdot\prod_{i=1}^m
    \frac{1}{\sigma_i(Q)}\int_Q |f_i|\sigma_i dy_i\\
&&\times\prod_{i=1}^m \sigma_i(E(Q))^{1/{p_i}}\\
&\le&\left(\sum_{Q\in\mathcal{S}} \left(\frac{1}{u_{\vec w}(Q)^{1-\alpha/n}}
  \int_Q h u_{\vec w}\right)^{p'}u_{\vec w}(E(Q))\right)^{1/{p'}}\\
&&\times \prod_{i=1}^m \left(\sum_{Q\in\mathcal{S}} \left(\frac{1}{\sigma_i(Q)}
  \int_Q |f_i|\sigma_i dy_i\right)^{p_i}\sigma_i(E(Q))\right)^{1/{p_i}}\\
&\le&\|M_{\alpha,u_{\vec w}}^{\mathscr{D}}h\|_{L^{p'}(u_{\vec w})}\prod_{i=1}^m
  \|M_{\sigma_i}^{\mathscr{D}}f_i\|_{L^{p_i}(\sigma_i)}\\
&\lesssim& \|h\|_{L^{q'}(u_{\vec w})}\prod_{i=1}^m \|f_i\|_{L^{p_i}(\sigma_i)}.
\end{eqnarray*}
It follows from (\ref{eq:Lm:L1:e1}) that
\[
  \|\mathcal{I}_\alpha^{\mathcal{S}}(|f_1|\sigma_1,\cdots,|f_m|\sigma_m)\|_{L^q(u_{\vec w})}\le
  C_{m,n,\vec{P},q}[\vec{w}]_{A_{\vec{P},q}}^{1-\alpha/n}\prod_{i=1}^m \|f_i\|_{L^{p_i}(\sigma_i)}.
\]
\end{proof}

\begin{Lemma}\label{lm:lq}
Let the hypotheses be as in Theorem~\ref{thm:m1}.
Moreover, let $\mathcal S$ be a sparse family of cubes.
Then we have
\[
  \|\mathcal{I}_{\alpha,q}^{\mathcal{S}}(|f_1|,\cdots,|f_m|)\|_{L^q(u_{\vec w})}\le
  C_{m,n,\vec{P},q}[\vec{w}]_{A_{\vec{P},q}}^{(1-\frac\alpha n)\max_i\{\frac{p_i'}{q}\}}\prod_{i=1}^m \|f_i\|_{L^{p_i}(w_i^{p_i})}.
\]
\end{Lemma}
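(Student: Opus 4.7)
The plan is to mirror the proof of Lemma~\ref{lm:l1}, but since duality is unavailable (particularly when $q\le1$) I would apply the $A_{\vec{P},q}$ condition at the higher power $\beta:=(1-\alpha/n)p'_{i_0}$ and let a dyadic fractional maximal operator absorb the leftover positive exponent.  After replacing $f_i$ by $f_i\sigma_i$ and expanding the $q$-th power of the norm as
\[
\sum_{Q\in\mathcal{S}}|Q|^{(\alpha/n-m)q}u_{\vec w}(Q)\prod_i\sigma_i(Q)^qF_i(Q)^q,\qquad F_i(Q):=\sigma_i(Q)^{-1}\int_Q f_i\sigma_i,
\]
I would split $u_{\vec w}(Q)=u_{\vec w}(Q)^{\beta}\cdot u_{\vec w}(Q)^{1-\beta}$, bound the first factor by $[\vec w]^{\beta}_{A_{\vec{P},q}}|Q|^{\beta q(m-\alpha/n)}\prod_i\sigma_i(Q)^{-\beta q/p'_i}$ via the $A_{\vec{P},q}$ condition, and bound the second (a non-positive power, since the hypothesis gives $\beta\ge\max_{j\ne i_0}p'_j>1$) by $u_{\vec w}(E(Q))^{1-\beta}$ using $u_{\vec w}(Q)\ge u_{\vec w}(E(Q))$.

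The resulting exponent $q(1-\beta/p'_j)$ on $\sigma_j(Q)$ is non-positive for $j\ne i_0$, so sparseness lets me replace $\sigma_j(Q)$ by $\sigma_j(E(Q))$; similarly $|Q|\le 2|E(Q)|$ handles the positive power of $|Q|$.  Only the $j=i_0$ exponent is the positive value $q\alpha/n$, which must remain on $Q$.  I would then apply the H\"older-type estimate (\ref{eq:e4}) to $|E(Q)|^{(\beta-1)q(m-\alpha/n)}$: defining $q^*$ by $1/q^*:=1/p_{i_0}-\alpha/n$, the two algebraic identities $(\beta-1)/p'_{i_0}=1/q^*$ and $(\beta-1)/p'_j+(1-\beta/p'_j)=1/p_j$ cause all $u_{\vec w}(E(Q))$ powers to cancel and the weight factor to collapse to
\[
C[\vec w]^{\beta}_{A_{\vec{P},q}}\,\sigma_{i_0}(E(Q))^{q/q^*}\prod_{j\ne i_0}\sigma_j(E(Q))^{q/p_j}\,\sigma_{i_0}(Q)^{q\alpha/n}.
\]

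The surviving $\sigma_{i_0}(Q)^{q\alpha/n}$ is grouped with $F_{i_0}(Q)^q$ via the pointwise bound $\sigma_{i_0}(Q)^{\alpha/n}F_{i_0}(Q)\le M^{\mathscr{D}}_{\alpha,\sigma_{i_0}}f_{i_0}(x)$ valid for $x\in Q$.  I then apply H\"older over $Q\in\mathcal{S}$ with exponents $r_j=p_j/q$ for $j\ne i_0$ and $r_{i_0}=q^*/q$: their reciprocals sum to exactly $1$, and the condition $p_{i_0}<n/\alpha$ (itself a consequence of $p'_{i_0}(1-\alpha/n)>1$) forces $q^*<\infty$ and each $r_j\ge 1$, since $p_j<q$ for some $j\ne i_0$ would imply $\alpha/n>\sum_{i\ne j}1/p_i\ge 1/p_{i_0}$.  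The $j\ne i_0$ slot telescopes to $\sum_QF_j(Q)^{p_j}\sigma_j(E(Q))\lesssim\|f_j\|^{p_j}_{L^{p_j}(\sigma_j)}$ via Proposition~\ref{prop:p2} with $\alpha=0$, while the $i_0$ slot telescopes to $\|M^{\mathscr{D}}_{\alpha,\sigma_{i_0}}f_{i_0}\|^{q^*}_{L^{q^*}(\sigma_{i_0})}\lesssim\|f_{i_0}\|^{q^*}_{L^{p_{i_0}}(\sigma_{i_0})}$ by Proposition~\ref{prop:p2} directly; taking the $q$-th root produces the exponent $\beta/q=(1-\alpha/n)\max_ip'_i/q$.

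The main technical obstacle is discovering the correct parameters $\beta$ and $q^*$ and verifying the two algebraic identities that simultaneously make the weight factor collapse cleanly and the H\"older exponents balance; the hypothesis $p'_{i_0}(1-\alpha/n)\ge\max_{j\ne i_0}p'_j$ is precisely the condition that supplies the sign restrictions ($\beta\ge1$ and $1-\beta/p'_j\le 0$ for $j\ne i_0$) making the sparseness-based passage from $Q$ to $E(Q)$ viable on every factor with a non-positive exponent.
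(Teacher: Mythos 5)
Your proposal is correct and follows essentially the same route as the paper's proof of Lemma~\ref{lm:lq}: the same power $\beta=(1-\alpha/n)p_{i_0}'$ on the $A_{\vec P,q}$ condition, the same use of the H\"older estimate \eqref{eq:e4} on $|E(Q)|$ together with sparseness to pass from $Q$ to $E(Q)$ on the non-positive-exponent factors, the same definition of the auxiliary exponent (your $q^*$ is the paper's $q_1$), the same discrete H\"older split with exponents $q^*/q$ and $p_j/q$, and the same final appeal to Proposition~\ref{prop:p2} in both its $\alpha=0$ and fractional forms. Your write-up is somewhat more careful in one spot: you explicitly verify that the H\"older exponents $p_j/q$ ($j\ne i_0$) are at least $1$, deducing from $p_{i_0}<n/\alpha$ that $p_j\ge q$ for $j\ne i_0$; the paper leaves this implicit.
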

\begin{proof}
Without loss of generality, assume that $p_1=\min\{p_1,\cdots,p_m\}$.
As in Lemma~\ref{lm:l1},
it is equivalent to prove the following
\[
  \|\mathcal{I}_{\alpha,q}^{\mathcal{S}}
   (|f_1|\sigma_1,\cdots,|f_m|\sigma_m)\|_{L^q(u_{\vec w})}
   \!\le\!
  C_{m,n,\vec{P},q}[\vec{w}]_{A_{\vec{P},q}}^{(1-\alpha/n)\max\{p'_i/q\}}
   \prod_{i=1}^m \|f_i\|_{L^{p_i}(\sigma_i)}.
\]
We have
\begin{eqnarray*}
&&\|\mathcal{I}_{\alpha,q}^{\mathcal{S}}(|f_1|\sigma_1,\cdots,|f_m|\sigma_m)
   \|_{L^q(u_{\vec w})}^q\\
&=&\sum_{Q\in\mathcal{S}}\left(|Q|^{\frac{\alpha}{n}-m}\prod_{i=1}^m\int_Q
  |f_i|\sigma_i dy_i \right)^q\cdot u_{\vec w}(Q)\\
&=&\sum_{Q\in\mathcal{S}}\left(\frac{u_{\vec w}(Q)}{|Q|}\right)^{(1-\frac{\alpha}{n})p_1'}
\prod_{i=1}^m\left(\frac{\sigma_i(Q)}{|Q|}\right)^{\frac{(1-\alpha/n)qp_1'}{p_i'}}
  |Q|^{q(m-\frac{\alpha}{n})((1-\frac{\alpha}{n})p_1'-1)}\\
&&\times\prod_{i=2}^m \sigma_i(Q)^{q-\frac{qp_1'}{p_i'}(1-\frac{\alpha}{n})}
    u_{\vec w}(Q)^{1-(1-\alpha/n)p_1'}\left(\frac{\sigma_1(Q)^{\frac{\alpha}{n}}}{\sigma_1(Q)}
       \int_Q |f_1|\sigma_1 dy_1 \right)^{q}\\
&&\times\prod_{i=2}^m \left(\frac{1}{\sigma_i(Q)}\int_Q |f_i|\sigma_i dy_i \right)^{q}\\
&\lesssim&[\vec{w}]_{A_{\vec{P},q}}^{(1-\alpha/n)p_1'}\sum_{Q\in\mathcal{S}}
  \left(\frac{\sigma_1(Q)^{\frac{\alpha}{n}}}{\sigma_1(Q)}\int_Q |f_1|\sigma_1 dy_1
    \right)^{q}\prod_{i=2}^m \left(\frac{1}{\sigma_i(Q)}\int_Q |f_i|\sigma_i dy_i
       \right)^{q}\\
&&\times|E(Q)|^{q(m-\frac{\alpha}{n})((1-\frac{\alpha}{n})p_1'-1)}
   u_{\vec{w}}(Q)^{1-(1-\alpha/n)p_1'}
\prod_{i=2}^m \sigma_i(Q)^{q-\frac{qp_1'}{p_i'}(1-\frac{\alpha}{n})}\\
&\le& [\vec{w}]_{A_{\vec{P},q}}^{(1-\alpha/n)p_1'}\sum_{Q\in\mathcal{S}}
  \left(\frac{\sigma_1(Q)^{\frac{\alpha}{n}}}{\sigma_1(Q)}\int_Q |f_1|
    \sigma_1 dy_1 \right)^{q}\prod_{i=2}^m \left(\frac{1}{\sigma_i(Q)}
      \int_Q |f_i|\sigma_i dy_i \right)^{q}\\
&&\times\sigma_1(E(Q))^{q((1-\frac{\alpha}{n})-\frac{1}{p_1'})}\prod_{i=2}^m
  \sigma_i(E(Q))^{q/{p_i}},
\end{eqnarray*}
where we use (\ref{eq:e4}) and the fact that $p_1'(1-\alpha/n) \ge \max_{i\neq 1}\{p_i'\}$
in the last step.

Let $1/q_1={1}/{p_1}-{\alpha}/{n}$.
By H\"older's inequality, we have
\begin{eqnarray*}
&&\|\mathcal{I}_{\alpha,q}^{\mathcal{S}}(|f_1|\sigma_1,\cdots,|f_m|\sigma_m)
   \|_{L^q(u_{\vec w})}^q\\
&\le& [\vec{w}]_{A_{\vec{P},q}}^{(1-\alpha/n)p_1'}
  \left(\sum_{Q\in\mathcal{S}}\left(\frac{\sigma_1(Q)^{\alpha/n}}{\sigma_1(Q)}
     \int_Q |f_1|\sigma_1 dy_1 \right)^{q_1}\sigma_1(E(Q))\right)^{q/{q_1}}\\
&&\times\prod_{i=2}^m\left(\sum_{Q\in\mathcal{S}}\left(\frac{1}{\sigma_i(Q)}
  \int_Q |f_i|\sigma_i dy_i \right)^{p_i}\sigma_i(E(Q))\right)^{q/{p_i}}\\
&\le&[\vec{w}]_{A_{\vec{P},q}}^{(1-\alpha/n)p_1'}\|M_{\alpha,\sigma_1}^{\mathscr{D}}
   f_1\|_{L^{q_1}(\sigma)}^q \prod_{i=2}^m
     \|M_{\sigma_i}^{\mathscr{D}}f_i\|_{L^{p_i}(\sigma_i)}^q
 \\
&\lesssim&[\vec{w}]_{A_{\vec{P},q}}^{(1-\alpha/n)p_1'}\prod_{i=1}^m
   \|f_i\|_{L^{p_i}(\sigma_i)}^{q},
\end{eqnarray*}
where $p_1'(1-\frac \alpha n)>1$ ensure that $p_1<\frac n \alpha$.
\end{proof}

\begin{proof}[Proof of Theorem~\ref{thm:m1}]
By Proposition~\ref{prop:p1}, we have
\[
  \mathcal{M}_{\alpha}(f_1,\cdots, f_m)(x)
  \le C_{m,n}\sum_{t\in\{0,1/3\}^n}\mathcal{M}_{\alpha}^{\mathscr{D}_t}
    (f_1,\cdots,f_m)(x).
\]
So it suffices to prove the desired conclusion for $\mathcal{M}_{\alpha}^{\mathscr{D}}$.
Let $a=2^{(m-\alpha/n)(n+1)}$  and $\Omega_k=\{x\in\bbR^n:
 \mathcal{M}_{\alpha}^{\mathscr{D}}(f_1,\cdots,f_m)(x)>a^k\}$. Suppose that
$\Omega_k=\bigcup_j Q_j^k$, where $Q_j^k$ are pairwise disjoint maximal dyadic cubes
 in $\Omega_k$. Then $\mathcal S:=\{Q_j^k\}$ is a sparse family.
To see this, it suffices to prove that
\[
  | Q_j^k\bigcap \Omega_{k+1}| \le \frac{1}{2} |Q_j^k|.
\]
In fact, by the definition of $Q_j^k$, we have
\[
  a^k < \prod_{i=1}^m \frac{1}{|Q_j^k|^{1-\alpha/mn}} \int_{Q_j^k} |f_i|
    \le 2^{mn-\alpha} a^k.
\]
It follows that
\begin{eqnarray*}
  | Q_j^k\bigcap \Omega_{k+1}|
&=& \sum_{Q_l^{k+1}\subset Q_j^k} |Q_l^{k+1}| \\
&\le& \sum_{Q_l^{k+1}\subset Q_j^k}  a^{-(k+1)/(m-\alpha/n)} \prod_{i=1}^m
   \Big(\int_{Q_l^{k+1}} f_i\Big)^{1/(m-\alpha/n)} \\
&\le& a^{-(k+1)/(m-\alpha/n)}\prod_{i=1}^m
\!\left(\sum_{Q_l^{k+1}\subset Q_j^k}
   \!\!\!\Big(\int_{Q_l^{k+1}} f_i\Big)^{1/(1-\alpha/nm)}\!\right)^{1/m}\! \\
&\le& a^{-(k+1)/(m-\alpha/n)}\prod_{i=1}^m
\left(  \int_{Q_j^k} f_i\right)^{1/(m-\alpha/n)} \\
&\le&
  a^{-1/(m-\alpha/n)} 2^n  |Q_j^k|
   \\
&=& \frac{1}{2} |Q_j^k|.
\end{eqnarray*}
Hence $\mathcal S:=\{Q_j^k\}$ is a sparse family.
Therefore,
\begin{eqnarray}
\label{eq:e1}&&\int_{\bbR^n}\mathcal{M}_{\alpha}^{\mathscr{D}}(f_1,\cdots,f_m)^q
  u_{\vec w}dx\\
&=& \sum_k \int_{\Omega_k\setminus \Omega_{k+1}}\mathcal{M}_{\alpha}^{\mathscr{D}}
  (f_1,\cdots,f_m)^q u_{\vec w}dx\nonumber\\
&\le& a^q\sum_{k,j}\left(\prod_{i=1}^m\frac{1}{|Q_j^k|^{1-\alpha/{mn}}}\int_{Q_j^k}
  |f_i(y_i)| dy_i\right)^q u_{\vec w}(E(Q_j^k))\nonumber\\
&\lesssim& \int_{\bbR^n}(\mathcal{I}_{\alpha,q}^{\mathcal{S}})(f_1,\cdots,
    f_m)^q u_{\vec w}dx\nonumber
\end{eqnarray}
Now the desired conclusion follows from Lemma~\ref{lm:lq}.
\end{proof}

\begin{proof}[Proof of Theorem~\ref{thm:m2}]
There are two cases.

(i).\,\, $q> 1$.  By (\ref{eq:q Gt 1}),
it suffices to prove that
\[
  \|\mathcal{I}_\alpha^{\mathcal{S}}(\vec{f})\|_{L^q(u_{\vec w})}\le
  C_{m,n,\vec{P},q}[\vec{w}]_{A_{\vec{P},q}}^{(1-\frac\alpha n)\max_i\{1,\frac{p_i'}{q}\}}\prod_{i=1}^m \|f_i\|_{L^{p_i}(w_i^{p_i})}.
\]
If $q(1-\alpha/n)\ge max\{p_i'\}$, the desired conclusion follows from Lemma~\ref{lm:l1}. If $p_j'(1-\frac{\alpha}{n})\ge \max_{i\neq j}\{p_i', q\}$,
without loss of generality, assume that $p_1'(1-\frac{\alpha}{n})\ge \max_{i\neq 1}\{p_i', q\}$.
By duality, we have
\begin{eqnarray*}
&& \|\mathcal{I}_\alpha^{\mathcal{S}}\|_{L^{p_1}(w_1^{p_1})\times\cdots\times L^{p_m}(w_m^{p_m})\rightarrow L^q(u_{\vec w})}\\
  &=&\|\mathcal{I}_\alpha^{\mathcal{S}}\|_{L^{q'}(\Pi_{i=1}^m w_i^{-q'})\times L^{p_2}(w_2^{p_2})\times\cdots
  \times L^{p_m}(w_m^{p_m})\rightarrow L^{p_1'}( w_1^{-p_1'})}\\
  &\lesssim&[\vec{w}]_{A_{\vec{P},q}}^{(1-\frac{\alpha}{n})\frac{p_1'}{q}},
\end{eqnarray*}
where we use Lemma~\ref{Lm:dual} in the last step.

(ii).\, $q\le1$. In this case,
\[
   \min_j \{\frac{\max_{i\neq j}\{p_i'\}}{p_j'}\}
 = \min\left\{\frac{\max_i{p_i'}}{q},
   \min_j\frac{\max_{i\neq j}\{p_i', q\}}{p_j'}\right\}
  \le 1-\frac{\alpha}{n}.
\]
By (\ref{eq:q Lt 1}),
it suffices to prove that
\[
  \|\mathcal{I}_{\alpha,q}^{\mathcal{S}}(\vec{f})\|_{L^q(u_{\vec w})}\le
  C_{m,n,\vec{P},q}[\vec{w}]_{A_{\vec{P},q}}^{(1-\frac\alpha n)\max_i\{1,\frac{p_i'}{q}\}}\prod_{i=1}^m \|f_i\|_{L^{p_i}(w_i^{p_i})}.
\]
By Lemma~\ref{lm:lq}, we get
\begin{eqnarray*}
\|\mathcal{I}_{\alpha,q}^{\mathcal{S}}(\vec{f})\|_{L^q(u_{\vec w})}&\le&
  C_{m,n,\vec{P},q}[\vec{w}]_{A_{\vec{P},q}}^{(1-\frac\alpha n)\max_i\{\frac{p_i'}{q}\}}\prod_{i=1}^m \|f_i\|_{L^{p_i}(w_i^{p_i})}\\
  &=&C_{m,n,\vec{P},q}[\vec{w}]_{A_{\vec{P},q}}^{(1-\frac\alpha n)\max_i\{1,\frac{p_i'}{q}\}}\prod_{i=1}^m \|f_i\|_{L^{p_i}(w_i^{p_i})}.
\end{eqnarray*}
This completes the proof.
\end{proof}

\section{Proof of Theorem~\ref{thm:m3}}
First, we introduce the sharp reverse H\"older's property of $A_\infty$ weights which was proved in \cite{HP} and \cite[Theorem 2.3]{HPR}. Recall that
\[
  [w]_{A_\infty}:=\sup_Q \frac{1}{w(Q)}\int_Q M(w\chi_Q).
\]
\begin{Proposition}[{\cite[Theorem 2.3]{HP}}]\label{prop:reverseH}
Let $w\in A_\infty$. Then
\begin{equation}\label{eq:e2}
\left(\frac{1}{|Q|}\int_Q w^{r(w)}\right)^{1/{r(w)}}\le 2\frac{1}{|Q|}\int_Q w,
\end{equation}
where $r(w)=1+\frac{1}{\tau_n [w]_{A_\infty}}$ and $\tau_n=2^{11+n}$. Notice that the conjugate $r(w)'\simeq [w]_{A_\infty}$.
\end{Proposition}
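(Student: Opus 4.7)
Fix a cube $Q$ and write $w_Q=\frac{1}{|Q|}\int_Q w$. The task reduces to showing $\int_Q w^r\,dx\le 2^r w_Q^r|Q|$ for $r=1+\frac{1}{\tau_n [w]_{A_\infty}}$ with $\tau_n=2^{n+11}$. Since $w\le M^{d,Q}w$ a.e.\ on $Q$, where $M^{d,Q}$ denotes the dyadic maximal operator restricted to subcubes of $Q$, the layer cake formula applied to $(M^{d,Q}w)^{r-1}$ against $w\,dx$ gives
\[
\int_Q w^r\,dx\;\le\;\int_Q w\,(M^{d,Q}w)^{r-1}\,dx\;=\;(r-1)\!\int_0^\infty\!\lambda^{r-2}\,w(\{M^{d,Q}w>\lambda\}\cap Q)\,d\lambda.
\]
The range $\lambda\le w_Q$ contributes at most $w_Q^{r-1}w(Q)=w_Q^r|Q|$, so the real task is to control the tail $\lambda>w_Q$.

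I would handle the tail by a Calder\'on--Zygmund principal-cube construction: let $\mathcal F=\bigsqcup_k\mathcal F_k$ with $\mathcal F_0=\{Q\}$, and declare the children of $F\in\mathcal F_k$ to be the maximal dyadic $F'\subsetneq F$ with $w_{F'}>2w_F$. Standard consequences are $2w_F<w_{F'}\le 2^{n+1}w_F$ for any child, pairwise disjoint sets $E(F):=F\setminus\bigcup_{F'\text{ child}}F'$ with $|E(F)|\ge|F|/2$, and, by propagating the stopping rule along both dyadic ancestors and descendants of $F$, the pointwise bound $M^{d,Q}w\le 2w_F$ a.e.\ on $E(F)$. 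The Fujii--Wilson hypothesis $\int_Q M(w\chi_Q)\le [w]_{A_\infty}w(Q)$, applied inside every $F\in\mathcal F$, combined with $M^{d,Q}w\ge w_F$ on $F$, yields the scale-invariant Carleson packing
\[
\sum_{F'\in\mathcal F,\,F'\subseteq F}w(F')\;\le\;2[w]_{A_\infty}w(F)\qquad\text{for every }F\in\mathcal F.
\]
Partitioning $Q$ by $\{E(F)\}$ and using $w\le 2w_F$ on $E(F)$ reduces the tail to estimating
\[
\int_Q w\,(M^{d,Q}w)^{r-1}\,dx\;\le\;2^{r-1}\sum_{F\in\mathcal F}w_F^{r-1}w(F).
\]

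The remaining step is a self-improving absorption: using the scale-by-scale Carleson bound together with $w_{F'}\le 2^{n+1}w_F$ for children, the double sum can be written as $\sum_{F}w_F^{r-1}w(F)\le w_Q^{r-1}w(Q)+c_n[w]_{A_\infty}(r-1)\sum_{F}w_F^{r-1}w(F)$; choosing $r-1\le 1/(\tau_n[w]_{A_\infty})$ with $\tau_n=2^{n+11}$ makes $c_n[w]_{A_\infty}(r-1)\le \tfrac12$, so absorption gives $\sum_F w_F^{r-1}w(F)\le 2w_Q^r|Q|$. Combined with the low-$\lambda$ contribution, this delivers $\int_Q w^r\le 2^r w_Q^r|Q|$, and the assertion $r(w)'\simeq [w]_{A_\infty}$ follows from $r(w)'=1+\tau_n[w]_{A_\infty}$. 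The main obstacle is the absorption itself with the sharp constant $c_n$: a one-shot application of $\sum_F w(F)\le 2[w]_{A_\infty}w(Q)$ together with $w_F\le (2^{n+1})^k w_Q$ in generation $k$ produces a geometric series of ratio $2^{(n+1)(r-1)}$ which diverges for every $r>1$. Only by using the Carleson condition at every scale, in a telescoping form that lets the factor $[w]_{A_\infty}(r-1)$ act multiplicatively rather than once, does the iteration close and yield the sharp exponent threshold $r-1\asymp 1/[w]_{A_\infty}$; tracking constants through this iteration fixes $\tau_n=2^{n+11}$ and the factor $2$ on the right-hand side of~(\ref{eq:e2}).
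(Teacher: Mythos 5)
The paper does not actually prove this Proposition; it is quoted verbatim from [HP, Theorem 2.3] (see also [HPR]), so your argument can only be measured against the known proof there. Your overall machinery (domination by the local dyadic maximal function, layer cake, principal cubes with $2w_F<w_{F'}\le 2^{n+1}w_F$, $|E(F)|\ge |F|/2$, $M^{d,Q}w\le 2w_F$ on $E(F)$, and the Fujii--Wilson packing $\sum_{F'\subseteq F}w(F')\le 2[w]_{A_\infty}w(F)$) is all correct and is indeed the right toolkit. The gap is the final absorption, which you assert rather than prove, and which as stated is false. If the inequality $\sum_F w_F^{r-1}w(F)\le w_Q^{r-1}w(Q)+c_n[w]_{A_\infty}(r-1)\sum_F w_F^{r-1}w(F)$ held with $c_n[w]_{A_\infty}(r-1)\le \tfrac12$, it would give $\sum_F w_F^{r-1}w(F)\le 2\,w_Q^{r-1}w(Q)$. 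Test this on $w(x)=|x|^{\varepsilon-n}$ on the unit cube with $\varepsilon\simeq 1/[w]_{A_\infty}$ (the same weights used in Section 5 of the paper): the principal cubes are essentially $F_j\approx B(0,4^{-j})$ with $w_{F_j}\simeq 4^j w_Q$ and $w(F_j)\simeq 4^{-j\varepsilon}w(Q)$, so with $r-1=1/(\tau_n[w]_{A_\infty})\ll\varepsilon$ one gets
\[
\sum_F w_F^{r-1}w(F)\;\simeq\; w_Q^{r-1}w(Q)\sum_{j\ge 0}4^{-j(\varepsilon-c(r-1))}\;\simeq\;[w]_{A_\infty}\,w_Q^{r-1}w(Q),
\]
which is incompatible with the claimed bound. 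So no choice of $c_n,\tau_n$ can make your absorption close with the constant $2$.

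The damage is done one line earlier, when you replace $w(E(F))$ by $w(F)$: for such weights $w(E(F))\simeq \varepsilon\, w(F)\simeq w(F)/[w]_{A_\infty}$, so this step throws away a factor of order $[w]_{A_\infty}$ that no subsequent self-improvement can recover; following your route one only obtains the reverse H\"older inequality with constant $\sim[w]_{A_\infty}$ in place of $2$, which would ruin the sharp mixed bound in Theorem 3 where the Proposition is used. The repair is to absorb into the non-lossy quantity $\sum_F w_F^{r-1}w(E(F))$, which is comparable to $\int_Q w\,(M^{d,Q}w)^{r-1}dx$ itself: telescoping along principal chains gives $w_F^{r-1}\le w_Q^{r-1}+c_n(r-1)\sum_{G\in\mathcal F,\,G\supseteq F}w_G^{r-1}$, interchanging the sums and using disjointness of the $E(F)$ yields $\int_Q w\,(M^{d,Q}w)^{r-1}dx\le 2^{r-1}\bigl(w_Q^{r-1}w(Q)+c_n(r-1)\sum_G w_G^{r-1}w(G)\bigr)$, and the remaining sum is controlled by $2[w]_{A_\infty}\int_Q (M^{d,Q}w)^{r-1}w\,dx$ via the Carleson embedding lemma (Lemma 4.1 of the paper) applied with $\mu=w\,dx$, $c_G=w(G)$, $a_G=w_G$; this absorption does close for $r-1\le 1/(\tau_n[w]_{A_\infty})$, provided you also truncate (finitely many generations, or a truncated maximal function) to ensure the absorbed quantity is finite a priori. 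Alternatively, one can follow the level-set argument of [HP]/[HPR], which applies the Fujii--Wilson condition to the Calder\'on--Zygmund cubes of $\{M^{d,Q}w>\lambda\}$ at every height $\lambda$ and absorbs at the level of $\int_Q (M^{d,Q}w)^{r-1}w\,dx$ directly.
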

We also need the following characterization of $A_{\vec{P},q}$ weights.
\begin{Proposition}[{\cite[Theorem 3.4]{M}}]
Suppose that $1<p_1,\cdots,p_m<\infty$, and $\vec{w}\in A_{\vec{P},q}$. Then
\[
  u_{\vec w}\in A_{mq}\quad\mbox{and}\quad \sigma_i\in A_{mp_i'}.
\]
\end{Proposition}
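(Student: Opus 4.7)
My plan is to derive both conclusions directly from the definition of $[\vec{w}]_{A_{\vec{P},q}}$ via H\"older's inequality, after expressing the dual weights $u_{\vec{w}}^{-1/(mq-1)}$ and $\sigma_i^{-1/(mp_i'-1)}$ as products of powers of the $w_j$'s and regrouping them as powers of the quantities $u_{\vec{w}}$ and $\sigma_j$ appearing in $[\vec{w}]_{A_{\vec{P},q}}$. The scaling relation $1/p_1+\cdots+1/p_m=1/q+\alpha/n$ with $\alpha\geq 0$ is what gives the slack required for the H\"older exponents to sum to at most $1$.

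\textbf{Step 1 ($u_{\vec{w}}\in A_{mq}$).} Using $u_{\vec{w}}=\prod_j w_j^q$, one writes
\[
u_{\vec{w}}^{-1/(mq-1)}=\prod_{j=1}^{m}w_j^{-q/(mq-1)}=\prod_{j=1}^{m}\sigma_j^{\,q/(p_j'(mq-1))}.
\]
I would apply H\"older with exponents $r_j=p_j'(mq-1)/q$. The scaling identity yields
\[
\sum_{j=1}^{m}\frac{1}{r_j}=\frac{q}{mq-1}\sum_{j=1}^{m}\frac{1}{p_j'}=\frac{q(m-1/q-\alpha/n)}{mq-1}=1-\frac{q\alpha/n}{mq-1}\leq 1,
\]
so (inserting a trivial factor to account for the deficit if $\alpha>0$) H\"older gives
\[
\Big(\frac{1}{|Q|}\int_Q u_{\vec{w}}^{-1/(mq-1)}\Big)^{mq-1}\leq \prod_{j=1}^{m}\Big(\frac{1}{|Q|}\int_Q \sigma_j\Big)^{q/p_j'}.
\]
Multiplying by $\frac{1}{|Q|}\int_Q u_{\vec{w}}$ produces the $A_{mq}$ quantity on the left and precisely $[\vec{w}]_{A_{\vec{P},q}}$ on the right.

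\textbf{Step 2 ($\sigma_i\in A_{mp_i'}$).} The key algebraic identity here is
\[
w_i=u_{\vec{w}}^{1/q}\prod_{j\neq i}\sigma_j^{1/p_j'},
\]
which follows at once from $u_{\vec{w}}=\prod_k w_k^q$. Raising to $p_i'/(mp_i'-1)$ and applying H\"older with exponents tailored to the $u_{\vec{w}}$ factor and the $\sigma_j$, $j\neq i$, I verify
\[
\frac{p_i'}{mp_i'-1}\Big(\frac{1}{q}+\sum_{j\neq i}\frac{1}{p_j'}\Big)=\frac{p_i'(m-\alpha/n)-1}{mp_i'-1}=1-\frac{\alpha p_i'/n}{mp_i'-1}\leq 1,
\]
so H\"older delivers
\[
\Big(\frac{1}{|Q|}\int_Q\sigma_i^{-1/(mp_i'-1)}\Big)^{mp_i'-1}\leq \Big(\frac{1}{|Q|}\int_Q u_{\vec{w}}\Big)^{p_i'/q}\prod_{j\neq i}\Big(\frac{1}{|Q|}\int_Q\sigma_j\Big)^{p_i'/p_j'}.
\]
Multiplying by $\frac{1}{|Q|}\int_Q\sigma_i=\big(\frac{1}{|Q|}\int_Q\sigma_i\big)^{p_i'/p_i'}$ completes the missing $j=i$ factor in the product and gives $[\vec{w}]_{A_{\vec{P},q}}^{p_i'/q}$ on the right.

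The only real obstacle is the bookkeeping for the H\"older exponents: one must select $r_j$ so that each resulting power matches $\sigma_j$ (or $u_{\vec{w}}$) exactly as it appears in $[\vec{w}]_{A_{\vec{P},q}}$, and then confirm that $\sum 1/r_j\leq 1$. Both verifications reduce to the same observation, namely that the $A_{\vec{P},q}$ scaling gives $\sum 1/p_j'=m-1/q-\alpha/n$, and the inequality $\alpha\geq 0$ is exactly what supplies the required slack. After that, both conclusions are just rearrangements of the H\"older output to match the definition of $[\vec{w}]_{A_{\vec{P},q}}$, with the quantitative bounds $[u_{\vec{w}}]_{A_{mq}}\leq [\vec{w}]_{A_{\vec{P},q}}$ and $[\sigma_i]_{A_{mp_i'}}\leq [\vec{w}]_{A_{\vec{P},q}}^{p_i'/q}$ coming out for free.
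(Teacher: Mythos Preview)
Your argument is correct. Note that the paper does not supply a proof of this proposition at all: it is quoted from \cite[Theorem~3.4]{M} and used as a black box, so there is no in-paper argument to compare against. Your approach---writing $u_{\vec w}^{-1/(mq-1)}$ and $\sigma_i^{-1/(mp_i'-1)}$ as products of powers of the $\sigma_j$ and $u_{\vec w}$, then applying H\"older with exponents calibrated so that the resulting averages match those in $[\vec w]_{A_{\vec P,q}}$---is exactly the standard proof of this result (and is the argument in the cited reference). The exponent bookkeeping checks out, and the quantitative bounds $[u_{\vec w}]_{A_{mq}}\le[\vec w]_{A_{\vec P,q}}$ and $[\sigma_i]_{A_{mp_i'}}\le[\vec w]_{A_{\vec P,q}}^{p_i'/q}$ you obtain are sharp. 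One small remark: you rely on $\alpha\ge 0$ to make $\sum 1/r_j\le 1$; this is implicit throughout the paper (all theorems assume $0\le\alpha<mn$), so it is a legitimate standing hypothesis even though it is not restated in the proposition.
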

\begin{proof}[Proof of Theorem~\ref{thm:m3}]
 Set $\alpha_i=(p_i'r_i)'$,
where $r_i$ is the exponent in the sharp reverse H\"older's inequality (\ref{eq:e2})
for the weights $\sigma_i$ which are in $A_\infty$ for $i = 1$, $\cdots$, $m$.
By (\ref{eq:e1}), we have
\begin{eqnarray*}
&&\int_{\bbR^n}\mathcal{M}_{\alpha}^{\mathscr{D}}(f_1,\cdots,f_m)^q
  u_{\vec w}dx\\
&\le&\sum_{k,j}\left(\prod_{i=1}^m\frac{1}{|Q_j^k|^{1-\alpha/{mn}}}
  \int_{Q_j^k} |f_i(y_i)| dy_i\right)^q u_{\vec w}(E(Q_j^k))\\
&\le&\sum_{k,j}\frac{u_{\vec w}(E(Q_j^k))}{|Q_j^k|^{-\alpha q/n}}
  \prod_{i=1}^m \left(\frac{1}{|Q_j^k|}\int_{Q_j^k}|f_i|^{\alpha_i}
     w_i^{\alpha_i}\right)^{\frac{q}{\alpha_i}}
\cdot\left(\frac{1}{|Q_j^k|}\int_{Q_j^k}w_i^{-\alpha_i'}
  \right)^{\frac{q}{\alpha_i'}}\\
&\le&\sum_{k,j}\frac{u_{\vec w}(E(Q_j^k))}{|Q_j^k|^{-\alpha q/n}}
   \prod_{i=1}^m \left(\frac{1}{|Q_j^k|}\int_{Q_j^k}|f_i|^{\alpha_i}
      w_i^{\alpha_i}\right)^{\frac{q}{\alpha_i}}
   \cdot \left(\frac{2}{|Q_j^k|}\int_{Q_j^k}\sigma_i\right)^{\frac{q}{p_i'}}\\
&&\hskip 70mm \mbox{(by Proposition~\ref{prop:reverseH})}    \\
&\le&C[\vec{w}]_{A_{\vec{P},q}}\sum_{k,j}|E(Q_j^k)|\prod_{i=1}^m
  \left(\frac{|Q_j^k|^{\frac{\alpha p \alpha_i}{n p_i}}}{|Q_j^k|}
    \int_{Q_j^k}|f_i|^{\alpha_i}w_i^{\alpha_i}\right)^{\frac{q}{\alpha_i}}\\
&\le&C[\vec{w}]_{A_{\vec{P},q}}\int_{\bbR^n}\prod_{i=1}^m
   M_{\frac{\alpha p \alpha_i}{p_i}}^{\mathscr{D}}
     (|f_i|^{\alpha_i} w_i^{\alpha_i})^{q/{\alpha_i}}dx\\
&\le&C[\vec{w}]_{A_{\vec{P},q}}\prod_{i=1}^m\left(\int_{\bbR^n}
   M_{\frac{\alpha p \alpha_i}{p_i}}^{\mathscr{D}}(|f_i|^{\alpha_i}
      w_i^{\alpha_i})^{q_i/{\alpha_i}}dx\right)^{q/{q_i}},
\end{eqnarray*}
where
\[
  \frac{1}{q_i}=\frac{1}{p_i}-\frac{\alpha p}{n p_i},\quad i=1,\cdots,m.
\]
Substituting $ \alpha p \alpha_i/p_i$ for $\alpha$ and letting $w=1$ in
Proposition~\ref{prop:p2}, we get
\begin{eqnarray*}
&&\int_{\bbR^n}\mathcal{M}_{\alpha}^{\mathscr{D}}(f_1,\cdots,f_m)^q
  u_{\vec{w}}dx\\
&\le& C[\vec{w}]_{A_{\vec{P},q}}\prod_{i=1}^m
  \left(1+\frac{(p_i/{\alpha_i})'}{(q_i/{\alpha_i})}
  \right)^{(1-\frac{\alpha p\alpha_i}{n p_i})
    \frac{q}{\alpha_i}}\bigg\||f_i|^{\alpha_i} w_i^{\alpha_i}
       \bigg\|_{L^{\frac{p_i}{\alpha_i}}(\bbR^n)}^{\frac{q}{\alpha_i}}\\
&=& C[\vec{w}]_{A_{\vec{P},q}}\prod_{i=1}^m \left(1+\frac{(p_i/{\alpha_i})'}
  {(q_i/{\alpha_i})}\right)^{(1-\frac{\alpha p\alpha_i}{n p_i})
    \frac{q}{\alpha_i}}\|f_i\|_{L^{p_i}(w_i^{p_i})}^q.
\end{eqnarray*}
It is easy to check that $\frac{p_i}{\alpha_i}-1\simeq [\sigma_i]_{A_\infty}^{-1}$.
Therefore
\[
  1+\frac{(p_i/{\alpha_i})'}{(q_i/{\alpha_i})}\lesssim 1+\frac{p_i}{q_i}[\sigma_i]_{A_\infty}\lesssim [\sigma_i]_{A_\infty}
\]
and
\begin{eqnarray*}
  (1-\frac{\alpha p\alpha_i}{n p_i})\frac{q}{\alpha_i}
&=&\frac{q}{p_i}-\frac{\alpha pq}{np_i}+\frac{q}{p_i}(\frac{p_i}{\alpha_i}-1) \\
&\le&
  \frac{q}{p_i} \left(1-\frac{\alpha p}{n}\right)
  +  \frac{q}{p_i}\cdot C [\sigma_i]_{A_\infty}^{-1}.
\end{eqnarray*}
Consequently,
\[
  \left(1+\frac{(p_i/{\alpha_i})'}{(q_i/{\alpha_i})}\right)^{(1-\frac{\alpha p\alpha_i}{n p_i})\frac{q}{\alpha_i}}
  \lesssim [\sigma_i]_{A_\infty}^{\frac{q}{p_i}(1-\frac{\alpha p}{n})}.
\]
Now we get
\[
  \|\mathcal{M}_{\alpha}(\vec{f})\|_{L^q(u_{\vec{w}})}\le
  C_{m,n,\vec{P},q}\left([\vec{w}]_{A_{\vec{P},q}}^{\frac{1}{q}}
  \prod_{i=1}^m [\sigma_i]_{A_\infty}^{\frac{1}{p_i}(1-\frac{\alpha p}{n})}\right)
  \prod_{i=1}^m \|f_i\|_{L^{p_i}(w_i^{p_i})}.
\]
\end{proof}

\section{Proof of Theorem \ref{thm:m4}}
To prove Theorem we will need the following Lemma about Carleson sequences (see \cite[Theorem 4.5]{HP} and \cite[Lemma 5.3]{CM2}).

\begin{Lemma} \label{lem:carleson}Suppose $a=\{a_Q\}_{Q\in \mathscr D}$ and $c=\{c_Q\}_{Q\in \mathscr D}$ are sequences with $c_Q$ nonnegative, and $\mu$ is a positive Borel measure.  Set $M^{\mathscr D} a(x)=\sup_{Q\ni x} |a_Q|$ and
\begin{equation}\label{carlconst}\mathscr C(c)=\sup_{R\in \mathscr D} \frac{1}{\mu(R)}\sum_{Q\subseteq R} c_Q.\end{equation}
If $r>0$ and $\mathscr C(c)<\infty$, then
$$\sum_{Q\in \mathscr D} |a_Q|^rc_Q\leq \mathscr C(c) \int_{\mathbb R^n} (M^{\mathscr D} a)^r\,d\mu.$$
\end{Lemma}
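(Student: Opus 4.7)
The plan is to prove this Carleson embedding inequality via the layer-cake representation of $|a_Q|^r$ together with a selection of maximal dyadic cubes at each level, which turns the Carleson condition into a direct bound on the measure of the level sets of the dyadic maximal function $M^{\mathscr D}a$.

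First I would write $|a_Q|^r = r\int_0^\infty \lambda^{r-1}\mathbf{1}_{\{|a_Q|>\lambda\}}\,d\lambda$ and apply Tonelli to obtain
\[
  \sum_{Q\in\mathscr D}|a_Q|^r c_Q = r\int_0^\infty \lambda^{r-1}\Big(\sum_{Q\in\mathscr D:\,|a_Q|>\lambda}c_Q\Big)\,d\lambda.
\]
For each $\lambda>0$, let $\mathcal M_\lambda$ denote the family of maximal cubes in $\{Q\in\mathscr D : |a_Q|>\lambda\}$. The cubes in $\mathcal M_\lambda$ are pairwise disjoint, and every $Q\in\mathscr D$ with $|a_Q|>\lambda$ sits inside a unique member of $\mathcal M_\lambda$. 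Using $c_Q\ge 0$ and the Carleson bound (\ref{carlconst}),
\[
  \sum_{Q:\,|a_Q|>\lambda}c_Q \le \sum_{R\in\mathcal M_\lambda}\sum_{Q\subseteq R}c_Q \le \mathscr C(c)\sum_{R\in\mathcal M_\lambda}\mu(R) = \mathscr C(c)\,\mu\Big(\bigsqcup_{R\in\mathcal M_\lambda}R\Big).
\]
From the definition of $M^{\mathscr D}a$ the disjoint union on the right is exactly the level set $\{x\in\bbR^n : M^{\mathscr D}a(x)>\lambda\}$, and reinserting this bound into the layer-cake identity yields
\[
  \sum_{Q\in\mathscr D}|a_Q|^r c_Q \le \mathscr C(c)\,r\int_0^\infty \lambda^{r-1}\mu\big(\{M^{\mathscr D}a>\lambda\}\big)\,d\lambda = \mathscr C(c)\int_{\bbR^n}(M^{\mathscr D}a)^r\,d\mu,
\]
which is the claimed inequality.

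The one technical wrinkle I expect to encounter is the existence of the maximal cubes: an infinite ascending chain $Q_1\subsetneq Q_2\subsetneq\cdots$ with $|a_{Q_j}|>\lambda$ for every $j$ admits no maximal element. I would handle this by first proving the estimate under the auxiliary assumption that only finitely many $a_Q$ are nonzero, where maximal cubes trivially exist, and then passing to the limit by monotone convergence along an exhaustion of $\mathscr D$ by finite subfamilies. If the right-hand side is infinite the inequality is vacuous, so we may assume it is finite; the partial sums on the left are then uniformly controlled, and the full estimate follows in the limit.
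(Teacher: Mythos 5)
Your argument is correct: the layer-cake formula plus the decomposition of each level set $\{M^{\mathscr D}a>\lambda\}$ into maximal dyadic cubes, with the Carleson condition applied on each maximal cube, gives exactly the stated bound, and your truncation to finitely many nonzero $a_Q$ properly handles the possible absence of maximal cubes. The paper itself gives no proof of this lemma, citing instead Hyt\"onen--P\'erez and Cruz-Uribe--Moen, and your proof is essentially the standard argument found in those references, so there is nothing to correct.
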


If $\mathscr C(c)<\infty$ in \eqref{carlconst} we say that $c=\{c_Q\}$ is a Carleson sequence with respect to $\mu$.   The constant $\mathscr C(c)$ is called the Carleson constant.  We are now ready to prove Theorem \ref{thm:m4}.

\begin{proof} Using the same decomposition as in the proof of Theorem \ref{thm:m3} we arrive at
\begin{align*}
\lefteqn{\left(\int_{\bbR^n}\mathcal{M}_{\alpha}^{\mathscr{D}}(f_1,\cdots,f_m)^q
  u\,dx\right)^{1/q}}\\
&\le\left(\sum_{k,j}\left(|Q_{j^k}|^{\frac{\alpha}{n}}\prod_{i=1}^m\frac{1}{|Q_j^k|}
  \int_{Q_j^k} |f_i(y_i)| dy_i\right)^q u(E(Q_j^k))\right)^{1/q}\\
  &\le [u,\vec w]_{A_{\vec{P},q}}\left(\sum_{k,j}\left(\prod_{i=1}^m
  \int_{Q_j^k} |f_i(y_i)| dy_i\,\sigma_i(Q_j^k)^{-\frac1{p_i'}}\right)^q\right)^{1/q}.
\end{align*}
Let $q_i=\frac{qp_i}{p}\geq p_i$ and note that $\frac{1}{q}=\frac{1}{q_1}+\cdots+\frac{1}{q_m}$ so we may use a discrete H\"older's inequality to obtain:
\begin{multline*}
\left(\sum_{k,j}\left(\prod_{i=1}^m
  \int_{Q_j^k} |f_i(y_i)| dy_i\,\sigma_i(Q_j^k)^{-\frac1{p_i'}}\right)^q\right)^{1/q}\\
 \leq \prod_{i=1}^m \left(\sum_{k,j}
  \left(\int_{Q_j^k} |f_i(y_i)| dy_i\,\sigma_i(Q_j^k)^{-\frac1{p_i'}}\right)^{q_i}\right)^{1/q_i}.
\end{multline*}
Next define $\beta_i$ by
$$\frac{\beta_i}{n}=\frac{1}{p_i}-\frac{1}{q_i}$$
and note that
$$
  \left(\int_{Q_j^k} |f_i(y_i)| dy_i\,\sigma_i(Q_j^k)^{-\frac1{p_i'}}\right)^{q_i}= \left(\frac{1}{\sigma_i(Q_i^k)^{1-\frac{\beta_i}{n}}}\int_{Q_j^k} |f_i(y_i)| dy_i\,\right)^{q_i}\sigma_i(Q_j^k).$$
We let
$$a_Q=\frac{1}{\sigma_i(Q)^{1-\frac{\beta_i}{n}}}\int_{Q} |f_i(y_i)| dy_i$$
if $Q=Q_j^k$ for some $j,k$ and $a_Q=0$ otherwise.  Likewise let $c_Q=\sigma_i(Q)$ if $Q=Q_j^k$ and $c_Q=0$ otherwise.  Further, notice that $\{c_Q\}$ is a Carleson sequence with respect to $\sigma_i$ with constant $\mathscr C(c)\leq [\sigma_i]_{A_\infty}$.  Indeed,
\begin{align*}
\sum_{Q\subset R} c_Q= \sum_{j,k: Q^k_j\subset R} \sigma_i(Q_j^k)&=\sum_{j,k: Q^k_j\subset R} \sigma_i(Q_j^k)\\
&\lesssim\sum_{j,k: Q^k_j\subset R} \frac{\sigma_i(Q_j^k)}{|Q_j^k|}|E(Q_j^k)|\\
&\leq\int_R M(\chi_R\sigma_i)\,dx\\
&\leq [\sigma_i]_{A_\infty} \sigma_i(R).
\end{align*}
Moreover
$$M^{\mathscr D} a(x)\leq M^{\mathscr D}_{\beta_i,\sigma_i}(f\sigma_i^{-1})(x)$$
and by Proposition \ref{prop:p2} we have
$$\|M^{\mathscr D}_{\beta_i,\sigma_i}(f\sigma_i^{-1})\|_{L^{q_i}(\sigma_i)}\lesssim \|f\sigma_i^{-1}\|_{L^{p_i}(\sigma_i)}=\|f\|_{L^{p_i}(w_i^{p_i})}.$$
Combining everything, we have
\begin{multline*}
{\left(\int_{\bbR^n}\mathcal{M}_{\alpha}^{\mathscr{D}}(f_1,\cdots,f_m)^q
  u\,dx\right)^{1/q}}\lesssim\\
 [u,\vec{w}]_{A_{\vec P,q}}\prod_{i=1}^m [\sigma_i]_{A_\infty}^{1/q_i}  \|f_i\|_{L^{p_i}(w_i^{p_i})}= \Big([u,\vec{w}]_{A_{\vec P,q}}\prod_{i=1}^m [\sigma_i]_{A_\infty}^{\frac{p}{qp_i}} \Big) \prod_{i=1}^m\|f_i\|_{L^{p_i}(w_i^{p_i})}.
\end{multline*}
\end{proof}

\section{Examples}
Finally, we end with some examples to show that our bounds are sharp.  First we show that Theorem~\ref{thm:m1} is sharp.  Consider the case $m=2$ (we leave it to the reader to modify the example for $m>2$) and suppose that
\begin{equation}\label{sharp}
\|\mathcal{M}_\alpha\|_{L^{p_1}(w_1^{p_1})\times L^{p_2}(w_2^{p_2}) \rightarrow L^q(w_1^q w_2^q)}\lesssim [\vec{w}]_{A_{\vec{P},q}}^{r(1-\frac{\alpha}{n})\max(\frac{p_1'}{q},\frac{p_2'}{q})}
\end{equation}
for some $r<1$.  Further suppose that $p_1'\geq p_2'$.  For $0<\ep<1$, let $f_1(x)=|x|^{\ep-n}\chi^{}_{B(0,1)}(x)$, $f_2(x)=|x|^{\frac{\ep-n}{p_2}}\chi_{B(0,1)}(x)$, $w_1(x)=|x|^{(n-\ep)/{p_1'}}$ and $w_2(x)=1$.  Calculations show that
$\|f_1\|_{L^{p_1}(w_1^{p_1})}\simeq\ep^{-{1}/{p_1}}, \|f_2\|_{L^{p_2}(w_2^{p_2})}\simeq\ep^{-1/p_2},$
and
$[\vec{w}]_{A_{\vec{P},q}}\simeq\ep^{-{q}/{p_1'}}.$
For $x\in B(0,1)$ we have
\begin{eqnarray*}
\mathcal{M}_\alpha(f_1,f_2)(x)&\gtrsim&\frac{1}{|x|^{n-\frac{\alpha}{2}}}\int_{B(0,|x|)} |y_1|^{\ep-n}\,dy_1 \cdot \frac{1}{|x|^{n-\frac{\alpha}{2}}}\int_{B(0,|x|)} |y_2|^{\frac{\ep-n}{p_2}}\,dy_2\\
&\gtrsim&\frac{1}{\ep}|x|^{\ep-n+\frac{\ep-n}{p_2}+\alpha}.
\end{eqnarray*}
Hence,
\begin{eqnarray}\label{eq:e3}
\|\mathcal{M}_\alpha(f_1,f_2)\|_{L^q(w_1^{q}w_2^q)}&\gtrsim &\frac{1}{\ep}\Big(\int_{B(0,1)}|x|^{(\ep-n)(q+\frac{q}{p_2}-\frac{q}{p_1'})+\alpha q}\,dx\Big)^{1/q}\\
&\simeq&\frac1\ep\Big(\int_0^1x^{(1+\frac{\alpha q}{n})\ep-1}\,dx\Big)^{1/q}\nonumber\\
&\simeq&\frac{1}{\ep}\Big(\frac{1}{\ep}\Big)^{1/q}.\nonumber
\end{eqnarray}
Combining this with inequality (\ref{sharp}) we see for some $r<1$,
$\Big(\frac1\ep\Big)^{1+\frac1q}\lesssim \Big(\frac1\ep\Big)^{r(1-\frac\alpha n)+\frac1p},$
which is impossible as $\ep\rightarrow 0$.

Next we show that Theorem~\ref{thm:m2} is sharp.
It is easy to notice that $\mathcal{M}_\alpha\le C_{m,n,\alpha} \mathcal{I}_\alpha$. If
  $\max_i p_i'\ge q$, then using the same $f_i$ and $w_i$ as above we get (\ref{eq:e3}) with $\mathcal{M}_\alpha$
replaced by $\mathcal{I}_\alpha$, showing the sharpness. On the other hand, if $\max_i p_i'< q$, the sharpness
follows from the standard duality argument used in the proof of Theorem~\ref{thm:m2}.

Finally we show that Theorem~\ref{thm:m3} is sharp.
For $0<\ep<1$, let
\[
  w_i(x)=|x|^{(n-\ep)/{p_i'}}\quad\mbox{and}\quad f_i(x)=|x|^{\ep-n}\chi^{}_{B(0,1)}(x), \quad i=1,\cdots,m.
\]
Then $u_{\vec{w}}=|x|^{(n-\ep)(m-\frac 1 p)q}$ and it is easy to check that
\[
  [\vec{w}]_{A_{\vec{p},q}}\simeq \left(\frac{1}{\ep}\right)^{q(m-\frac{1}{p})},\quad [\sigma_i]_{A_\infty}\lesssim \frac{1}{\ep}
  \quad\mbox{and}\quad \prod_{i=1}^m \|f_i\|_{L^{p_i}(w_i^{p_i})}\simeq \left(\frac{1}{\ep}\right)^{1/p}.
\]
For $x\in B(0,1)$, we have
\begin{eqnarray*}
\mathcal{M}_\alpha(\vec{f})(x)&\gtrsim&\prod_{i=1}^m\frac{1}{|x|^{n-\frac{\alpha}{m}}}\int_{B(0,|x|)} |y_i|^{\ep-n}\,dy_i\\
&\gtrsim&\left(\frac{1}{\ep}\right)^m |x|^{m(\ep-n)+\alpha}.
\end{eqnarray*}
Therefore,
\begin{eqnarray*}
\|\mathcal{M}_\alpha(\vec{f})\|_{L^q(u_{\vec{w}})}&\gtrsim& \left(\frac{1}{\ep}\right)^m \left(\int_{B(0,1)}|x|^{mq(\ep-n)+\alpha q +(n-\ep)(m-\frac{1}{p})q}dx\right)^{\frac{1}{q}}\\
&\simeq & \left(\frac{1}{\ep}\right)^m \left(\int_0^1 t^{(\alpha+1)\ep -1}dt\right)^{\frac{1}{q}}\\
&\simeq& \left(\frac{1}{\ep}\right)^{m+\frac{1}{q}}\\
&\gtrsim&\left([\vec{w}]_{A_{\vec{P},q}}^{\frac{1}{q}}\prod_{i=1}^m [\sigma_i]_{A_\infty}^{\frac{1}{p_i}(1-\frac{\alpha p}{n})}\right)\prod_{i=1}^m \|f_i\|_{L^{p_i}(w_i^{p_i})}.
\end{eqnarray*}


\end{document}